\newcommand{\rank}{\mathop{\mathrm{rank}}\nolimits}
\newcommand{\codim}{\mathop{\mathrm{codim}}\nolimits}
\newcommand{\cA}{\mathcal{A}}
\newcommand{\cB}{\mathcal{B}}
\newcommand{\cC}{\mathcal{C}}
\newcommand{\cF}{\mathcal{F}}
\newcommand{\cL}{\mathcal{L}}
\newcommand{\cR}{\mathcal{R}}
\newcommand{\Sym}{\text{\rm Sym}}
\DeclareMathOperator*{\Ker}{Ker}
\newcommand{\E}{\mathbb E}
\newcommand{\R}{\mathbb{R}}
\newcommand{\N}{\mathbb{N}}
\newcommand{\Z}{\mathbb{Z}}
\renewcommand{\P}{\mathbb{P}}
\newcommand{\Int}{\mathop{\mathrm{Int}}\nolimits}
\newcommand{\Cone}{\mathop{\mathrm{Cone}}\nolimits}
\newcommand{\conv}{\mathop{\mathrm{Conv}}\nolimits}
\newcommand{\aff}{\mathop{\mathrm{aff}}\nolimits}
\newcommand{\lin}{\mathop{\mathrm{lin}}\nolimits}
\newcommand{\eps}{\varepsilon}
\newcommand{\eqdistr}{\stackrel{d}{=}}
\newcommand{\bsl}{\backslash}
\newcommand{\ind}{\mathbbm{1}}
\newcommand{\dd}{{\rm d}}
\newcommand{\eee}{{\rm e}}
\newcommand{\bsigma}{{\boldsymbol{\varsigma}}}
\theoremstyle{plain}
\newtheorem{theorem}{Theorem}[section]
\newtheorem{lemma}[theorem]{Lemma}
\newtheorem{proposition}[theorem]{Proposition}
\theoremstyle{definition}
\newtheorem{example}[theorem]{Example}
\theoremstyle{remark}
\newtheorem{remark}[theorem]{Remark}
\newcommand{\stirling}[2]{\genfrac{[}{]}{0pt}{}{#1}{#2}}
\newcommand{\stirlingsec}[2]{\genfrac{\{}{\}}{0pt}{}{#1}{#2}}
\begin{document}

\author{Zakhar Kabluchko}
\address{Zakhar Kabluchko: Institut f\"ur Mathematische Stochastik,
Westf\"alische Wilhelms-Universit\"at M\"unster,
Orl\'eans--Ring 10,
48149 M\"unster, Germany}
\email{zakhar.kabluchko@uni-muenster.de}

\author{Vladislav Vysotsky}
\address{Vladislav Vysotsky, University of Sussex,
Pevensey 2 Building,
Falmer Campus,
Brighton BN1 9QH,
United Kingdom
and
St.\ Petersburg Department of Steklov Mathematical Institute,
Fontanka~27,
191011 St.\ Petersburg,
Russia}
\email{v.vysotskiy@sussex.ac.uk, vysotsky@pdmi.ras.ru}

\author{Dmitry Zaporozhets}
\address{Dmitry Zaporozhets, St.\ Petersburg Department of Steklov Mathematical Institute,
Fontanka~27,
191011 St.\ Petersburg,
Russia}
\email{zap1979@gmail.com}

\title[Convex hulls of random walks]{Convex hulls of random walks: Expected number of faces and face probabilities}
\keywords{Convex hull, random walk, random walk bridge, absorption probability, distribution-free probability, exchangeability, hyperplane arrangement, Whitney's formula, Zaslavsky's theorem, characteristic polynomial, Weyl chamber, finite reflection group, convex cone,  Wendel's formula, random polytope, average number of faces, average number of vertices,  discrete arcsine law}
\thanks{This paper was written when V.V. was affiliated to Imperial College London, where his work was supported by People Programme (Marie Curie Actions) of the European Union's Seventh Framework Programme (FP7/2007-2013) under REA grant agreement n$^\circ$[628803].
His work is also supported in part by Grant 16-01-00367 by RFBR.
The work of D.Z.\ is supported in parts by Grant 16-01-00367 by RFBR, the Program of Fundamental Researches of Russian Academy of Sciences ``Modern Problems of Fundamental Mathematics'', and by Project SFB 1283 of Bielefeld University}

\subjclass[2010]{Primary: 52A22, 60D05, 60G50; secondary:  60G09, 52C35, 20F55, 52B11, 60G70}

\begin{abstract}
Consider a sequence of partial sums $S_i= \xi_1+\dots+\xi_i$, $1\leq i\leq n$, starting at $S_0=0$, whose increments $\xi_1,\dots,\xi_n$ are random vectors in $\mathbb R^d$, $d\leq n$.  We are interested in the properties of the convex hull $C_n:=\mathrm{Conv}(S_0,S_1,\dots,S_n)$.
Assuming that the tuple $(\xi_1,\dots,\xi_n)$ is exchangeable
and a certain general position condition holds, we prove that the expected number of $k$-dimensional faces of $C_n$ is given by the formula
$$
\E [f_k(C_n)] = \frac{2\cdot k!}{n!} \sum_{l=0}^{\infty}\genfrac{[}{]}{0pt}{}{n+1}{d-2l}  \genfrac{\{}{\}}{0pt}{}{d-2l}{k+1},
$$
for all $0\leq k \leq d-1$, where $\genfrac{[}{]}{0pt}{}{n}{m}$ and $\genfrac{\{}{\}}{0pt}{}{n}{m}$ are Stirling numbers of the first and second kind, respectively.

Further, we compute explicitly the probability that for given indices $0\leq i_1<\dots <i_{k+1}\leq n$, the points $S_{i_1},\dots,S_{i_{k+1}}$ form a $k$-dimensional face of $\mathrm{Conv}(S_0,S_1,\dots,S_n)$. This is done in two different settings: for random walks with symmetrically exchangeable increments and for random bridges with exchangeable increments. These results generalize the classical one-dimensional discrete arcsine law for the position of the maximum due to E.\ Sparre Andersen.
All our formulae are distribution-free, that is do not depend on the distribution of the increments~$\xi_k$'s.

The main ingredient in the proof  is the computation of the probability that the origin is absorbed by a {\it joint} convex hull of several random walks and bridges whose increments are invariant with respect to the action of direct product of finitely many reflection groups of types $A_{n-1}$ and $B_n$. This probability, in turn, is related to the number of Weyl chambers of a product-type reflection group that are intersected by a linear subspace in general position.
\end{abstract}

\maketitle

\section{Statement of main results}\label{1306}

\subsection{Introduction}
Let $\xi_1,\dots,\xi_n$ be (possibly dependent) random $d$-dimensional vectors with partial sums
$$
S_i = \xi_1 + \dots + \xi_i,\quad  1\leq i\leq n,\quad  S_0=0.
$$
The sequence $S_0,S_1,\dots,S_n$ will be referred to as \emph{random walk} or, if the additional boundary condition $S_n=0$ is imposed, a \emph{random bridge}.

In the one-dimensional case $d=1$, Sparre Andersen~\cite{Sparre2, sparre_andersen1,sparre_andersen2} derived remarkable formulae for several functionals of the random walk $S_0,S_1,\dots,S_n$ including the number of positive terms and the position of the maximum.
More specifically, assuming that the joint distribution of the increments $(\xi_1,\dots,\xi_n)$ is invariant under arbitrary  permutations and sign changes and that $\P[S_i=0]=0$ for all $1\leq i\leq n$, Sparre Andersen proved in~\cite[Theorem~C]{sparre_andersen2} the following \emph{discrete arcsine law} for the position of the maximum:
\begin{equation}\label{eq:arcsine_maximum_0}
\P\left[\max\{S_0,\dots,S_n\} = S_i\right] =
\frac 1 {2^{2n}} \binom{2i}{i} \binom{2n-2i}{n-i},
\quad
i=0,\dots,n.
\end{equation}
By the symmetry, the same holds for the position of the minimum. Surprisingly, the above formula is distribution-free, that is its right-hand side does not depend on the distribution of $(\xi_1,\dots,\xi_n)$ provided the symmetric exchangeability and the general position assumptions mentioned above are satisfied. Another unexpected consequence of this formula is that the maximum is more likely to be attained at $i=0$ or $i=n$ rather than at $i\approx n/2$, as one could na\"{i}vely guess. A discussion of the arcsine laws can be found in Feller's book~\cite[Vol II, Section XII.8]{Feller}.

Let us now turn to the general $d$-dimensional case and ask ourselves what could be an appropriate multidimensional generalization of~\eqref{eq:arcsine_maximum_0}. Of course, the maximum and the minimum are not well defined for multidimensional random walks, but instead we can consider vertices (and, more generally, faces) of the \emph{convex hull} 
\begin{equation*}
C_n := \conv(S_0,S_1,\dots, S_n)
=\{\alpha_0 S_0+\dots+\alpha_n S_n \colon \alpha_0,\dots,\alpha_n \geq 0, \alpha_0+\dots+\alpha_n =1\}.
\end{equation*}
Clearly, $C_n$ is a random polytope in $\R^d$ whose vertices belong to the collection $\{S_0,S_1,\dots,S_n\}$. In the one-dimensional case, $C_n$ has two vertices, the maximum and the minimum, but in higher dimensions the question on the number of vertices (or, more generally, faces) and their positions becomes non-trivial.
The main results of the present paper can be summarized as follows.
Under the appropriate exchangeability and general position assumptions on the random walk or bridge, we compute
\begin{enumerate}
\item [(a)] 
the expected number of $k$-dimensional faces of $C_n$, for all $0\leq k\leq d-1$, and
\item [(b)] 
the probability that the simplex $\conv(S_{i_1},\dots,S_{i_{k+1}})$ is a $k$-dimensional face of the convex hull $C_n$, for a given collection of indices $0\leq i_1 < \dots < i_{k+1}\leq n$.
\end{enumerate}

All formulae turn out to be distribution-free. The probabilities in (b), referred to as \emph{face probabilities}, will be interpreted below in terms of the so-called \emph{absorption probabilities}, that is the probabilities that a joint convex hull of several random walks and random bridges contains the origin. In our recent work~\cite{KVZ15}, we showed that in the case of just \emph{one} random walk or bridge, the absorption probability can be computed in a purely geometric way by counting the number of Weyl chambers of a certain reflection group that are intersected  by a linear subspace in general position. Moreover, we showed in~\cite{KVZ15} that random walks correspond to Weyl chambers of type $B_n$, whereas random bridges correspond to type $A_{n-1}$ chambers.
In the present paper, we extend the results of~\cite{KVZ15} to \emph{joint} convex hulls of \emph{several} random walks and bridges. We shall show that the corresponding absorption probabilities can be interpreted in terms of Weyl chambers of \emph{product type}. Our main  formula for the absorption probabilities will be stated in Theorem~\ref{1435}, below.

We shall argue below that (b) can be viewed as a generalization of the discrete arcsine law to higher dimensions. Let us mention that there is another discrete arcsine law, also due to Sparre Andersen~\cite[Theorem~C]{sparre_andersen2},  for the number of positive terms in a random walk. A multidimensional generalization of this result is considered in our separate paper~\cite{KVZ16_arcsine}.

Convex hulls of random walks and their applications to Brownian motions and L\'evy processes were much studied; see, e.g.,\ \cite{Nielsen,Baxter,Eldan0,kabluchko_zaporozhets_sobolev,kampf_etal,MCR10,molchanov_wespi,SS,SW,vysotsky_zaporozhets,wade_xu}.  These papers concentrate mostly on functionals like the volume and the perimeter, which are not distribution-free. Face probabilities for faces of maximal dimension were computed by Barndorff-Nielsen and Baxter~\cite{Nielsen} and Vysotsky and Zaporozhets~\cite{vysotsky_zaporozhets}. We shall recover the corresponding formula as a special case of our results, but our methods are different from that of~\cite{Nielsen, vysotsky_zaporozhets}.
Reviews of the literature on random convex hulls and random polytopes can be found in~\cite{hug_survey,MCR10,schneider_polytopes}.

\subsection{Expected number of \texorpdfstring{$k$}{k}-faces}
Recall that $C_n= \conv(S_0,\dots,S_n)$ denotes the convex hull of a $d$-dimensional random walk $(S_i)_{i=0}^n$ with increments $\xi_1,\dots,\xi_n$. The increments are random vectors which may be dependent. Our first result is a formula for the expected number of $k$-dimensional faces  of $C_n$.
To state it, we need to impose the following assumptions on the joint distribution of  the increments. 
\begin{enumerate}
\item[$(\text{Ex})$] \textit{Exchangeability:} For every permutation $\sigma$ of the set $\{1,\dots,n\}$, we have the distributional equality
$$
(\xi_{\sigma(1)},\dots,  \xi_{\sigma(n)}) \eqdistr (\xi_1,\dots,\xi_n).
$$
\item[$(\text{GP})$] \textit{General position:}
For every $1\leq i_1 < \dots < i_d\leq n$, the probability that the vectors $S_{i_1}, \dots,S_{i_d}$ are linearly dependent is $0$.
\end{enumerate}

\begin{example}
Conditions $(\text{Ex})$ and $(\text{GP})$ are satisfied if $\xi_1,\dots,\xi_n$ are independent identically distributed, and for every hyperplane $H_0\subset \R^d$ passing through the origin we have $\P[S_i\in H_0] = 0$, for all $1\leq i\leq n$. The proof of $(\text{GP})$ can be found in~\cite[Proposition~2.5]{KVZ15}, while the proof of $(\text{Ex})$ is trivial. The assumption $\P[S_i\in H_0] = 0$, $1\leq i\leq n$, in turn follows (for i.i.d. increments) if we assume that $\P[\xi_1 \in H]=0$ for every affine hyperplane $H \subset \R^d$; this is again shown in the proof of~\cite[Proposition~2.5]{KVZ15}. To clarify, the proposition additionally assumes (although does not state explicitly in the published version)  that $\xi_1 \eqdistr - \xi_1$, but this is not required for the implications mentioned above.
\end{example}

Denote by $\mathcal F_k(C)$, where $0\leq k\leq d-1$, the set of all $k$-dimensional faces (or just $k$-faces, in short) of a convex polytope $C$. Let $f_k(C)$ be the number of $k$-faces of $C$:
\[
f_k(C):=\#\mathcal F_k(C).
\]
Note that under assumption $(\text{GP})$, all faces of $C_n$ are simplices with probability $1$; see Remark~\ref{rem:simplicial}, below for a proof.

\begin{theorem}\label{theo:expected_walk}
Let $(S_i)_{i=0}^n$ be a random walk in $\R^d$, $n\geq d$,  whose increments $\xi_1,\dots,\xi_n$ satisfy conditions $(\text{Ex})$ and $(\text{GP})$.
Then, for all $0\leq k\leq d-1$, 
\begin{equation}\label{eq:E_F_k_C_n_main_theorem}
\E [f_k(C_n)] = \frac{2\cdot k!}{n!} \sum_{l=0}^{\infty}\stirling{n+1}{d-2l}  \stirlingsec{d-2l}{k+1}.
\end{equation}
\end{theorem}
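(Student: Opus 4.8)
The plan is to reduce the expected-face-count to a sum of face probabilities via linearity of expectation, then to evaluate that sum by recognizing it as an absorption probability that can be computed geometrically.

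First I would write, using that all faces are simplices almost surely under (GP),
\begin{equation*}
\E[f_k(C_n)] = \sum_{0\le i_1<\dots<i_{k+1}\le n} \P\bigl[\conv(S_{i_1},\dots,S_{i_{k+1}}) \in \mathcal F_k(C_n)\bigr].
\end{equation*}
The key structural observation is that $\conv(S_{i_1},\dots,S_{i_{k+1}})$ is a face of $C_n$ precisely when there is a linear functional that is maximized on $C_n$ exactly along the affine span of these points; equivalently (after translating so one of the points is the origin, or by a standard duality argument) the face probability should be expressible as the probability that the origin is \emph{not} absorbed by a suitable convex hull built from the increments. Concretely, splitting the increment sequence at the indices $i_1,\dots,i_{k+1}$ produces several blocks, and by exchangeability the face event decouples into an absorption-type event for a \emph{joint} configuration of random walks (and one random bridge, coming from the ``closed'' block between consecutive chosen indices) whose increments are invariant under the action of a product of reflection groups of types $A$ and $B$. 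At this point I would invoke the machinery promised in the abstract: the absorption probability equals, up to normalization by the order of the reflection group, the number of Weyl chambers of that product-type group that meet a generic linear subspace, and the latter count is given by a characteristic-polynomial / Whitney-formula evaluation producing Stirling numbers.

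The main computational step is then purely combinatorial: summing the resulting per-tuple probabilities over all $\binom{n+1}{k+1}$ choices of $0\le i_1<\dots<i_{k+1}\le n$ (indices among $S_0,\dots,S_n$). Each tuple contributes a term indexed by the block sizes, and the sum over compositions collapses, via the convolution identity for Stirling numbers of the first kind together with the factorial normalization $1/n!$ coming from $|B_n|=2^n n!$-type orders, into the closed form
\begin{equation*}
\frac{2\cdot k!}{n!}\sum_{l=0}^{\infty}\stirling{n+1}{d-2l}\stirlingsec{d-2l}{k+1}.
\end{equation*}
The Stirling numbers of the second kind $\stirlingsec{d-2l}{k+1}$ should enter because, after fixing the dimension $d-2l$ of the relevant Weyl-chamber intersection count (the ``even-codimension'' contributions being what survives, as in the one random-walk case of \cite{KVZ15}), one must distribute the $d-2l$ effective coordinates among the $k+1$ chosen points — a set-partition count — while the $2$ is the familiar ``$\pm$'' factor from the $B_n$-type symmetry.

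I expect the main obstacle to be the second step: correctly identifying which product of reflection groups governs a given tuple of indices and bookkeeping the contribution of the ``interior'' block (the portion of the walk strictly between $i_1$ and $i_{k+1}$) as a random \emph{bridge} rather than a walk, so that it carries an $A$-type rather than $B$-type symmetry. Getting the group-order normalizations consistent across all blocks, and then verifying that the composition sum telescopes to the stated Stirling convolution, is where the real work lies; the reduction to face probabilities and the final combinatorial simplification are comparatively routine once the framework of Theorem~\ref{1435} is in hand.
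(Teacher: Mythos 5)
Your outline reproduces the paper's strategy for the \emph{symmetric} case: linearity of expectation over all $\binom{n+1}{k+1}$ tuples, reduction of each face probability to a non-absorption probability for a joint configuration of two walks and $k$ bridges (note: $k$ bridges, one per consecutive pair of chosen indices, not one), identification of the governing group as $B_{i_1}\times A_{i_2-i_1-1}\times\dots\times A_{i_{k+1}-i_k-1}\times B_{n-i_{k+1}}$, and a final generating-function collapse of the composition sum into the Stirling convolution. That part is sound and matches Sections 4 and 5.2 of the paper.

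However, there is a genuine gap: Theorem~\ref{theo:expected_walk} assumes only $(\text{Ex})$ and $(\text{GP})$, whereas your per-tuple computation requires $(\pm\text{Ex})$. The two outer blocks (increments before $i_1$ and after $i_{k+1}$) project to random \emph{walks}, and the $B$-type Weyl-chamber count applies to them only if the joint law is invariant under sign flips of those increments; exchangeability alone does not provide this. This is not a technicality that can be waved away: Example~\ref{ex:non_symm_RW} exhibits an exchangeable walk for which the individual face (vertex) probabilities genuinely depend on the distribution and are \emph{not} given by the symmetric formula. Only the \emph{sum} over all tuples is distribution-free under $(\text{Ex})$, and your route of evaluating each summand separately cannot establish that. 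The paper closes this gap with a separate argument (Theorem~\ref{1455}): append the increment $\xi_{n+1}=-S_n$ to form a closed path, reshuffle all $n+1$ increments by an independent uniform permutation to obtain an exchangeable \emph{bridge} of length $n+1$, and observe that cyclic re-rooting shows $\E[f_k(C_n)]=\E[f_k(C'_{n+1})]$. Since bridge face probabilities need only $A$-type (permutation) invariance, the bridge count is distribution-free under $(\text{Ex})$ alone, and it agrees with the symmetric-walk answer already computed. Without some version of this walk-to-bridge reduction, your proof establishes the formula only under the stronger hypothesis $(\pm\text{Ex})$.
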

 The right-hand side  contains the (signless) \emph{Stirling numbers of the first kind} $\stirling{n}{m}$ and the \emph{Stirling numbers of the second kind} $\stirlingsec{n}{m}$, where $m, n\in\N$ and $1\leq m \leq n$, which are defined as the number of permutations of an $n$-element set with exactly $m$ cycles and the number of partitions of an $n$-element set into $m$ non-empty subsets, respectively. The exponential generating functions of the Stirling numbers are given by
\begin{equation}\label{eq:stirling_def}
\sum_{n=m}^{\infty} \stirling{n}{m}\frac{t^n}{n!} = \frac 1 {m!} \left(\log \frac 1 {1-t}\right)^m,
\quad
\sum_{n=m}^{\infty} \stirlingsec{n}{m}\frac{t^n}{n!} = \frac 1 {m!} (\eee^{t}-1)^m.
\end{equation}
For these and other properties of Stirling numbers, we refer to~\cite[Chapters~6 and~7]{graham_knuth_patashnik_book}. For $n\in\N$,  $m\in \Z\backslash \{1,\dots,n\}$ and $n \notin \N$ we use the convention $\stirling{n}{m} = \stirlingsec{n}{m}=0$, so that the sum in~\eqref{eq:E_F_k_C_n_main_theorem} contains only finitely many non-vanishing terms.
The Stirling numbers of the first kind can also be defined as the coefficients of the rising factorial
\begin{equation}\label{eq:rising_factorial}
t^{(n)} := t(t+1)(t+2)\dots (t+n-1) = \sum_{j=0}^n \stirling{n}{j} t^j.
\end{equation}

\begin{remark}
Let us mention some special cases of Theorem~\ref{theo:expected_walk}. For faces of maximal dimension (where $k=d-1$ and only the term with $l=0$ is present) and vertices (where $k=0$), formula~\eqref{eq:E_F_k_C_n_main_theorem} simplifies to
$$
\E\, [f_{d-1}(C_n)]
=
\frac {2(d-1)!} {n!}
\stirling{n+1}{d},
\quad
\E\, [f_{0}(C_n)]
=
\frac 2 {n!} \sum_{l=0}^\infty
 \stirling{n+1}{d-2l},
$$
where we used the identities $\stirlingsec{d}{d} = 1$ and $\stirlingsec{d-2l}{1} = 1$ (for $d-2l\geq 1$).  For example, in dimension $d=2$, both the expected number of edges and the expected number of vertices of the random polygon $C_n$ are equal to $2H_{n} := 2 \left(1+\frac 12 +\dots + \frac 1{n}\right)$ since $\stirling {n+1}2 = n! H_{n}$. This result was known~\cite{Baxter, vysotsky_zaporozhets}. In dimension $d=1$ we recover the trivial formula $\E\, [f_{0}(C_n)] = 2$, which accounts the two vertices being the maximum and the minimum of the random walk, since $\stirling {n+1}1 = n!$.
\end{remark}

\begin{remark}
Using the fixed $k$ asymptotic formula for Stirling numbers of the first kind, see~\cite[page~160]{jordan_book} (or~\cite{wilf} for much more precise asymptotics), namely
\begin{equation}\label{eq:stirling_asympt}
\frac 1{(n-1)!} \stirling{n}{k} \sim \frac{(\log n)^{k-1}}{(k-1)!}, \quad n\to\infty, \quad k \text{ fixed},
\end{equation}
we obtain 
\begin{equation}\label{eq:F_k_asympt}
\E\, [f_k(C_n)]  \sim \frac{2\cdot k!}{(d-1)!} \stirlingsec{d}{k+1} (\log n)^{d-1}, \quad n\to\infty, \quad k, d \text{ fixed}.
\end{equation}
Here, $a_n\sim b_n$ means that $\lim_{n\to\infty} a_n/b_n = 1$. Interestingly, the same asymptotics (up to a constant factor) holds for the expected number of $k$-faces of the convex hull of $n$ i.i.d.\ points uniformly distributed in a $d$-dimensional convex polytope; see~\cite{mR05}.
\end{remark}


\begin{remark}\label{rem:simplicial}
Under assumptions $(\text{Ex})$ and $(\text{GP})$, each face $g\in\mathcal F_k(C_n)$ of the polytope $C_n$ is, with probability $1$, a $k$-dimensional simplex of the form
$$
g = \conv(S_{j_1(g)},\dots,S_{j_{k+1}(g)})
$$
for some indices $0\leq j_1(g)<\dots<j_{k+1}(g)\leq n$. It suffices to prove this for $k=d-1$ because all faces of a simplex are simplices. For all $0\leq i_1<\dots<i_{d+1}\leq n$ we have
\begin{multline*}
\P[S_{i_1},\dots,S_{i_{d+1}} \text{ are contained in a common hyperplane}]
\\
\begin{aligned}
&=
\P[S_{i_2}-S_{i_1},S_{i_3}-S_{i_1},\dots,S_{i_{d+1}}-S_{i_1} \text{ are linearly dependent}] \\
&=
\P[S_{i_2-i_1},S_{i_3-i_1}, \dots,S_{i_{d+1}-i_1} \text{ are linearly dependent}] \\
&=
0
\end{aligned}
\end{multline*}
by assumptions $(\text{Ex})$ and $(\text{GP})$. It follows that, with probability $1$, every $(d-1)$-dimensional face of $C_n$ contains at most $d$ vertices and, consequently, is a simplex.
\end{remark}

\subsection{Face probabilities for symmetric random walks}
In the next theorem we compute the probability that a given collection of points of the random walk forms a face of the convex polytope $C_n$. To state it, we need an assumption which, in addition to exchangeability, requires invariance with respect to sign changes:

\begin{itemize}
\item [$(\pm\text{Ex})$] \textit{Symmetric exchangeability:} For every permutation $\sigma$ of the set $\{1,\dots,n\}$ and every $\eps_1,\dots,\eps_n\in \{-1,+1\}$, there is the distributional equality
    $$
    (\xi_1,\dots,\xi_n) \eqdistr (\eps_1 \xi_{\sigma(1)}, \dots, \eps_n \xi_{\sigma(n)}).
    $$
\end{itemize}
Random walks satisfying $(\pm\text{Ex})$ will be frequently referred to as \emph{symmetric}. For example, $(\pm\text{Ex})$ is satisfied if $\xi_1,\dots,\xi_n$ are i.i.d.\ random vectors in $\R^d$ with centrally symmetric distribution (meaning that $\xi_1$ has the same distribution as $-\xi_1$).
\begin{theorem}\label{1249}
Let $(S_i)_{i=0}^n$ be a random walk in $\R^d$ whose increments $\xi_1,\dots,\xi_n$ satisfy assumptions $(\pm\text{Ex})$ and $(\text{GP})$.
Fix some $0\leq k\leq d-1$ and let $0 \le i_1 < \dots < i_{k+1} \le n$ be any indices. Then,
$$
\P[\conv(S_{i_1},\dots,S_{i_{k+1}}) \in \mathcal F_k(C_n)]
=
\frac{2(P_{i_1,\dots,i_{k+1}}^{(n)}(d-k-1) + P_{i_1,\dots,i_{k+1}}^{(n)}(d-k-3)+\dots)}
{2^{i_1+n-i_{k+1}} i_1!(i_2-i_1)!\dots  (i_{k+1}-i_k)! (n-i_{k+1})!} ,
$$
where the $P_{i_1,\dots,i_{k+1}}^{(n)}(j)$'s  are the coefficients of the polynomial
\begin{multline*}
(t+1)(t+3)\dots (t+2i_1-1)\times(t+1)(t+3)\dots (t+2(n-i_{k+1})-1)\\
\times\prod_{l=1}^k ((t+1)(t+2)\dots (t+i_{l+1}-i_l-1)) = \sum_{j=0}^{n-k} P_{i_1,\dots,i_{k+1}}^{(n)}(j) t^j.
\end{multline*}
For $j<0$ or $j>n-k$ we use the convention $P_{i_1,\dots,i_{k+1}}^{(n)}(j) := 0$.
\end{theorem}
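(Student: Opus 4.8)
The plan is to realise the event $\{\conv(S_{i_1},\dots,S_{i_{k+1}})\in\mathcal F_k(C_n)\}$ as a \emph{non-absorption} event for an appropriate joint family of random walks and bridges in a lower-dimensional space, and then to invoke Theorem~\ref{1435}. Write $I:=\{i_1,\dots,i_{k+1}\}$ and let $W:=\lin\{S_{i_2}-S_{i_1},\dots,S_{i_{k+1}}-S_{i_1}\}$, a linear subspace of $\R^d$; by $(\text{GP})$ together with $(\text{Ex})$ (as in Remark~\ref{rem:simplicial}) it has dimension $k$ almost surely. Let $\pi\colon\R^d\to\R^d/W\cong\R^{d-k}$ be the canonical projection, so that $\pi(S_{i_1})=\dots=\pi(S_{i_{k+1}})$. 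First I would show, by an elementary polytope argument, that $\conv(S_{i_1},\dots,S_{i_{k+1}})$ is a face of $C_n$ if and only if the common image $\pi(S_{i_1})$ is a vertex of the polytope $\pi(C_n)=\conv(\pi(S_0),\dots,\pi(S_n))$, the degenerate cases (a larger face, or a drop of dimension) being excluded by $(\text{GP})$; and that, by the separating hyperplane theorem, the latter happens if and only if the origin of $\R^{d-k}$ does not lie in the convex hull of $\{\pi(S_j-S_{i_1})\colon j\in\{0,\dots,n\}\setminus I\}$.

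Next I would identify that last convex hull. The indices split $\{1,\dots,n\}$ into the blocks of increments $B_0=(\xi_1,\dots,\xi_{i_1})$, $B_l=(\xi_{i_l+1},\dots,\xi_{i_{l+1}})$ for $1\le l\le k$, and $B_{k+1}=(\xi_{i_{k+1}+1},\dots,\xi_n)$. Using $S_{i_l}-S_{i_1}\in W$ for every $l$, a direct computation shows that $\{\pi(S_j-S_{i_1})\colon j\notin I\}$ is the disjoint union, over the blocks, of: all but the starting point of a random walk of length $i_1$ in $\R^{d-k}$ coming from $B_0$ (its increments being $\pi(-\xi_{i_1}),\dots,\pi(-\xi_1)$, i.e.\ block $B_0$ reversed and sign-flipped); all but the two endpoints of a random bridge of length $i_{l+1}-i_l$ in $\R^{d-k}$ coming from $B_l$, for each $l=1,\dots,k$ (its increments being $\pi(\xi_{i_l+1}),\dots,\pi(\xi_{i_{l+1}})$, which sum to $\pi(S_{i_{l+1}}-S_{i_l})=0$); and all but the starting point of a random walk of length $n-i_{k+1}$ in $\R^{d-k}$ coming from $B_{k+1}$. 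Hence $\conv(S_{i_1},\dots,S_{i_{k+1}})\in\mathcal F_k(C_n)$ holds if and only if the origin is not absorbed by the \emph{joint} convex hull of these two random walks and $k$ random bridges in $\R^{d-k}$.

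To apply Theorem~\ref{1435} I would condition on $\mathcal G:=\sigma(S_{i_2}-S_{i_1},\dots,S_{i_{k+1}}-S_{i_k})$, which makes $W$ and $\pi$ measurable. Consider the direct product of reflection groups $G:=B_{i_1}\times B_{n-i_{k+1}}\times A_{i_2-i_1-1}\times\dots\times A_{i_{k+1}-i_k-1}$ acting on $(\xi_1,\dots,\xi_n)$ by permutations and sign changes within $B_0$ and $B_{k+1}$ and by permutations within $B_1,\dots,B_k$; every such action fixes each difference $S_{i_{l+1}}-S_{i_l}$, so by $(\pm\text{Ex})$ the tuple $(\xi_1,\dots,\xi_n)$, and hence the projected tuple $(\pi(\xi_1),\dots,\pi(\xi_n))$, is $G$-invariant conditionally on $\mathcal G$. (Sign invariance of the increments is exactly what turns the two end blocks into honest type-$B$ random walks; the middle blocks need only exchangeability and become type-$A$ bridges.) One also checks, using $(\text{GP})$ and a projection lemma in the spirit of~\cite[Proposition~2.5]{KVZ15}, that the projected family is almost surely in general position in $\R^{d-k}$. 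Thus the hypotheses of Theorem~\ref{1435} hold conditionally on $\mathcal G$ for the joint family of $2$ walks and $k$ bridges in $\R^{d-k}$ attached to the product group $G$, and that theorem computes the conditional non-absorption probability as a distribution-free --- hence deterministic --- quantity; taking expectations removes the conditioning.

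Finally I would match Theorem~\ref{1435}'s formula with the asserted one. The order of $G$ is $|B_{i_1}|\,|B_{n-i_{k+1}}|\prod_{l=1}^k|A_{i_{l+1}-i_l-1}|=2^{i_1}i_1!\cdot 2^{n-i_{k+1}}(n-i_{k+1})!\cdot\prod_{l=1}^k(i_{l+1}-i_l)!$, which is precisely the denominator in the statement. The exponents of $G$ are those of its factors, namely $\{1,3,\dots,2i_1-1\}$, $\{1,3,\dots,2(n-i_{k+1})-1\}$ and $\{1,2,\dots,i_{l+1}-i_l-1\}$ for $l=1,\dots,k$; consequently $\prod_{m}(t+m)$ over the exponents $m$ of $G$ equals exactly the polynomial $\sum_j P^{(n)}_{i_1,\dots,i_{k+1}}(j)\,t^j$ displayed in the statement, and its degree is the rank $i_1+(n-i_{k+1})+\sum_{l=1}^k(i_{l+1}-i_l-1)=n-k$ of $G$. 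Inserting the ambient dimension $d-k$ into the chamber-count part of Theorem~\ref{1435} then yields the numerator $2\bigl(P^{(n)}_{i_1,\dots,i_{k+1}}(d-k-1)+P^{(n)}_{i_1,\dots,i_{k+1}}(d-k-3)+\dots\bigr)$, completing the proof. The main obstacle I anticipate lies in the first three steps: the almost-sure equivalences of Step~1, the preservation of general position under the \emph{random} projection $\pi$ in Step~3 (where $W$ is correlated with the very vectors being projected), and the careful verification --- including the reversal and sign flip of block $B_0$ --- that the conditionally-$G$-invariant projected family, whose blocks need not be independent, genuinely satisfies the hypotheses of Theorem~\ref{1435}.
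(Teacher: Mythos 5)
Your proposal is correct and follows essentially the same route as the paper's proof: project onto the orthogonal complement (equivalently, the quotient by $W$) of the affine hull of $S_{i_1},\dots,S_{i_{k+1}}$, identify the face event with the non-absorption of the origin by the joint hull of two reversed/sign-adjusted walks and $k$ bridges, and apply Theorem~\ref{1435} for the product group $B_{i_1}\times B_{n-i_{k+1}}\times A_{i_2-i_1-1}\times\dots\times A_{i_{k+1}-i_k-1}$. Your explicit conditioning on $\sigma(S_{i_2}-S_{i_1},\dots,S_{i_{k+1}}-S_{i_k})$ is a clean way to formalize what the paper expresses by noting that the group action does not change $M$, and the general-position verification you flag as the main obstacle is carried out in the paper exactly along the lines you sketch.
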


\begin{remark}
Take some $0\leq i\leq n$. For the probability that $S_i$ is a vertex of the convex hull $C_n$, we obtain, by taking $k=0$ in Theorem~\ref{1249},
\begin{equation}\label{eq:S_i_is_vertex}
\P[S_{i} \in \mathcal F_{0}(C_n)] =
\frac{P_{i}^{(n)}(d-1) + P_{i}^{(n)}(d-3)+\dots}{2^{n-1} i!(n-i)!},
\end{equation}
where the $P_{i}^{(n)}(j)$'s  are the coefficients of the polynomial
\begin{equation}\label{eq:S_i_is_vertex_cont}
(t+1)(t+3)\dots (t+2i-1)\times(t+1)(t+3)\dots (t+2(n-i)-1) = \sum_{j=0}^n P_{i}^{(n)}(j) t^j.
\end{equation}
In the one-dimensional case $d=1$, the convex hull is the interval
$$
C_n = \left[\min_{i=0,\dots,n}S_i, \max_{i=0,\dots,n} S_i\right],
$$
so (by symmetry of the increments) the probability that $S_i$ is a vertex of $C_n$ is just twice the probability that $S_i$ is the maximum. Therefore, \eqref{eq:S_i_is_vertex} and~\eqref{eq:S_i_is_vertex_cont} yield
\begin{equation}\label{eq:arcsine_maximum}
\P[\max\{S_0,\dots,S_n\} = S_i] =
\frac 1 {2^{2n}} \binom{2i}{i} \binom{2n-2i}{n-i}
= \frac {(2i-1)!!(2n-2i-1)!!}{(2i)!!(2n-2i)!!},
\end{equation}
for all $i=0,\dots,n$, which recovers the discrete arcsine law for the position of the maximum due to Sparre Andersen~\cite[Theorem~C]{sparre_andersen2}, see also~\cite[Vol II, Section XII.8]{Feller}.
Thus, we can view Theorem~\ref{1249} as a multidimensional generalization of the discrete arcsine law.
\end{remark}

\begin{remark}
For faces of maximal possible dimension $k = d-1$, Theorem~\ref{1249} (with only non-zero term $P^{(n)}_{i_1,\dots,i_d}(0)$ in the numerator) recovers a formula of Vysotsky and Zaporozhets~\cite{vysotsky_zaporozhets}:
$$
\P[\conv(S_{i_1},\dots,S_{i_{d}}) \in \mathcal F_{d-1}(C_n)] = 2 \frac{(2i_1-1)!!}{(2i_1)!!}\frac{(2n - 2i_d-1)!!}{(2n - 2i_d)!!} \prod_{j=1}^{d-1} \frac{1}{i_{j+1} - i_j}.
$$
\end{remark}
\begin{remark}
The coefficients $P_{i_1,\dots,i_{k+1}}^{(n)}(j)$ admit the following probabilistic interpretation. Consider the random variables
$$
K_n := \ind_{A_1} + \ind_{A_2}+ \dots + \ind_{A_n}, \quad L_n := \ind_{A_2} + \ind_{A_4} + \dots + \ind_{A_{2n}}, \quad n\in\N,
$$
where $A_1,A_2,\dots$ are independent events with $\P[A_m] = 1/m$, $m\in\N$.  The generating functions of $K_n$ and $L_n$ are given by
$$
\E t^{K_n} = \frac{t(t+1)\dots (t+n-1)}{n!}, \quad
\E t^{L_n} = \frac{(t+1)(t+3)\dots (t+2n-1)}{2^n n!}.
$$

It is well known that the number of cycles  of a uniform random permutation on $n$ elements has the same distribution as $K_n$. This can be deduced from the connection between random uniform permutations and the Chinese restaurant process; see~\cite[Section~3.1]{pitman_book}. To give a similar interpretation of $L_n$, consider the group of \emph{signed} permutations of the set $\{1,\dots,n\}$. Any such permutation can be written in the form
$$
\Sigma=
\begin{pmatrix}
1 & 2 &  \dots  & n \\
\eps_1 \sigma(1) & \eps_2\sigma(2) &  \dots &  \eps_n \sigma(n)
\end{pmatrix},
$$
where $\sigma$ is a permutation on $\{1,\dots,n\}$ and $\eps_1,\dots,\eps_n\in \{-1,+1\}$. The permutation $\sigma$ can be decomposed into cycles. We call a cycle $w_1 \to w_2\to \dots \to w_r\to w_1$ of $\sigma$ an \emph{even cycle} of the signed permutation $\Sigma$ if $\eps_{w_1}\dots \eps_{w_r} = +1$, i.e.\ if making a full turn along the cycle does not change the sign.
Clearly, for a uniformly chosen random signed permutation, any cycle is even with probability $1/2$, independently of all other cycles. This implies that the number of even cycles has the same distribution as $L_n$. The symmetric group and the group of signed permutations, acting on $\R^n$ as the reflection groups $A_{n-1}$ and $B_n$, will play a major role in our proofs.

Let us now return to Theorem~\ref{1249}. Let $L_{i_1}^{(0)},K_{i_2-i_1}^{(1)}, \dots, K_{i_{k+1}-i_k}^{(k)}, L_{n-i_{k+1}}^{(k+1)}$ be independent random variables with the same distributions as $L_{i_1},K_{i_2-i_1}, \dots, K_{i_{k+1}-i_k}, L_{n-i_{k+1}}$, respectively. Then Theorem~\ref{1249} states that
$$
\P[\conv(S_{i_1},\dots,S_{i_{k+1}}) \in \mathcal F_k(C_n)] =
2 \sum_{l=0}^\infty \P[L_{i_1}^{(0)} + K_{i_2-i_1}^{(1)} +  \dots + K_{i_{k+1}-i_k}^{(k)} + L_{n-i_{k+1}}^{(k+1)} = d-2l-1].
$$
Thus, the faces probabilities are similar to the distribution functions of the total number of cycles in a set of independent random permutations of the appropriate types.
\end{remark}
\begin{example}\label{ex:non_symm_RW}
In a sharp contrast with Theorem~\ref{theo:expected_walk}, the assumption of \textit{symmetric} exchangeability is essential in Theorem~\ref{1249}.  To see this,  consider i.i.d.\ standard normal random vectors  $\eta_1,\dots,\eta_n$ in $\R^d$ and define
$$
\xi_1(t) := 1 + t\eta_1,\quad \dots, \quad \xi_n(t) := 1 + t\eta_n, \quad t>0.
$$
Clearly, the random vectors $\xi_1(t),\dots,\xi_n(t)$ satisfy assumptions $(\text{Ex})$ and $(\text{GP})$ for all $t>0$. On the other hand, the corresponding random walk $S_i(t):= \xi_1(t)+\dots+\xi_i(t)$, $1\leq i\leq n$, starting at $S_0(t):=0$ satisfies
$$
p(t) := \P[S_0(t) \text{ is a vertex of } \conv(S_0(t),\dots,S_n(t))] = \P[0\notin \conv(S_1(t),\dots,S_n(t))],
$$
which converges to $1$ as $t\to 0$ because $\ind_{\{0 \in \conv(S_1(t),\dots,S_n(t))\}} \to 0$ a.s.\ as $t\to 0$. It follows that $p(t)$ cannot be given by~\eqref{eq:S_i_is_vertex} for sufficiently small $t$. The reason is the lack of central symmetry of the distribution of increments.
\end{example}

\subsection{Face probabilities for random bridges}
Random bridges are essentially random walks required to return to the origin after $n$ steps.
Formally, let $\xi_1,\dots,\xi_n$ be (in general, dependent) random vectors in $\R^d$ with partial sums $S_i= \xi_1+\dots+\xi_i$, $1\leq i\leq n$, and $S_0=0$.
We impose the following assumptions on the increments $\xi_1,\dots,\xi_n$:
\begin{itemize}
\item[$(\text{Br})$] \textit{Bridge property:} $S_n=\xi_1+\dots+\xi_n = 0$ a.s.
\item[$(\text{Ex})$] \textit{Exchangeability:} For every permutation $\sigma$ of the set $\{1,\dots,n\}$, we have the distributional equality
$$
(\xi_{\sigma(1)},\dots,  \xi_{\sigma(n)}) \eqdistr (\xi_1,\dots,\xi_n).
$$
\item[$(\text{GP}')$] \textit{General position:}
For every $1\leq i_1 < \dots < i_d \leq n-1$, the probability that the vectors $S_{i_1}, \dots, S_{i_d}$ are linearly dependent, is $0$.
\end{itemize}
The bridge starts and terminates at the origin: $S_0=S_n =0$ a.s. Let us stress that, unlike in the case of random walks, we don't need any central symmetry  assumption on the increments. As above, we denote by $C_n= \conv (S_0,\dots,S_n)$ the convex hull of $S_0,\dots,S_n$ and by $\mathcal F_k(C_n)$ the set of its $k$-faces, where $0\leq k\leq d-1$.


\begin{theorem}\label{1249bridge}
Let $(S_i)_{i=0}^n$ be a random bridge in $\R^d$ whose increments $\xi_1,\dots,\xi_n$ satisfy the above assumptions $(\text{Br})$, $(\text{Ex})$, $(\text{GP}')$.
Fix some $0\leq k\leq d-1$ and let $0 \le i_1 < \dots < i_{k+1}  < n$ be any indices. Then,
$$
\P[\conv(S_{i_1},\dots,S_{i_{k+1}}) \in \mathcal F_k(C_n)]
=
\frac{
2(Q_{i_1,\dots,i_{k+1}}^{(n)}(d-k-1) + Q_{i_1,\dots,i_{k+1}}^{(n)}(d-k-3)+\dots)
}{(i_2-i_1)!\dots  (i_{k+1}-i_k)! (n-i_{k+1}+i_1)!},
$$
where the $Q_{i_1,\dots,i_{k+1}}^{(n)}(j)$'s  are the coefficients of the polynomial
\begin{equation*}
\prod_{l=1}^{k+1} ((t+1)(t+2)\dots (t+i_{l+1}-i_l-1)) = \sum_{j=0}^{n-k-1} Q_{i_1,\dots,i_{k+1}}^{(n)}(j) t^j,
\end{equation*}
and we put $i_{k+2} = n+i_1$. For $j<0$ and $j>n-k-1$ we use the convention $Q_{i_1,\dots,i_{k+1}}^{(n)}(j):=0$.
\end{theorem}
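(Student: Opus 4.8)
The plan is to follow the strategy of the random-walk case (Theorem~\ref{1249}), the only structural novelty being that the bridge identity $S_0=S_n=0$ lets the two ``end pieces'' of the walk fuse into a single closed loop, which is what replaces the two type-$B$ factors of Theorem~\ref{1249} by one extra type-$A$ factor. First I would reduce the face probability to a joint absorption probability. Set $v_l:=S_{i_{l+1}}-S_{i_l}$ for $l=1,\dots,k$ and, using $i_{k+2}:=n+i_1$ together with $S_0=S_n$, set $v_{k+1}:=S_{i_1}-S_{i_{k+1}}=-(v_1+\dots+v_k)$; let $L:=\lin(v_1,\dots,v_k)$, which is $k$-dimensional a.s.\ by $(\text{GP}')$, and let $\pi\colon\R^d\to\R^d/L\cong\R^{d-k}$ be the quotient map. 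A supporting-hyperplane argument, combined with $(\text{GP}')$ (which a.s.\ excludes every $S_j$, $j\notin\{i_1,\dots,i_{k+1}\}$, from the affine hull $S_{i_1}+L$), shows that $\conv(S_{i_1},\dots,S_{i_{k+1}})$ is a $k$-face of $C_n$ iff the common image $\pi(S_{i_1})=\dots=\pi(S_{i_{k+1}})$ is a vertex of $\pi(C_n)$, i.e.\ iff it is not contained in the convex hull of the remaining images. Cut the bridge at the cyclic times $i_1<\dots<i_{k+1}$ into $k+1$ arcs, the $l$-th having $m_l:=i_{l+1}-i_l$ increments (the $(k+1)$-st running from $i_{k+1}$ through $n\equiv0$ to $i_1$, so $m_1+\dots+m_{k+1}=n$), with arc-$l$ increment sum $v_l$. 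Since every $v_l\in L$, all $\pi(S_{i_l})$ coincide and $\pi(S_j)-\pi(S_{i_1})$ for $j$ in arc $l$ is just the $\pi$-image of the partial sum inside that arc; so the condition becomes: the origin is not absorbed by the joint convex hull of the internal vertices of the $k+1$ projected arcs, and because $\pi(v_l)=0$ each projected arc is a genuine random \emph{bridge} in $\R^{d-k}$.

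Next I would set up the distributional hypotheses needed to apply the joint absorption theorem. Conditioning on the unordered increment multisets of the $k+1$ arcs --- which fixes $v_1,\dots,v_{k+1}$, hence $L$ and $\pi$ --- assumption $(\text{Ex})$ makes the within-arc orders independent and uniform, so the $k+1$ projected bridges carry exactly the invariance required by the joint absorption result (Theorem~\ref{1435}), the $l$-th being of type $A_{m_l-1}$ (an $m_l$-step bridge). One then checks that $(\text{GP}')$ --- stated for all $d$-tuples of indices with entries $\le n-1$ --- is precisely what guarantees that the projected configuration in $\R^{d-k}$ is a.s.\ in the general position hypothesized by Theorem~\ref{1435}.

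Applying Theorem~\ref{1435} to $k+1$ bridges of types $A_{m_1-1},\dots,A_{m_{k+1}-1}$ in $\R^{d-k}$, the absorption probability from Step~1 equals
\[
\frac{2\,(c_{d-k-1}+c_{d-k-3}+\cdots)}{m_1!\cdots m_{k+1}!},
\]
where the $c_j$ are the unsigned coefficients of the characteristic polynomial of the product reflection arrangement of $W:=A_{m_1-1}\times\dots\times A_{m_{k+1}-1}$. Since $\chi_{A_{m-1}}(t)=(t-1)(t-2)\cdots(t-(m-1))$ and characteristic polynomials multiply over products, $\sum_j c_j t^j=\prod_{l=1}^{k+1}\bigl((t+1)(t+2)\cdots(t+m_l-1)\bigr)$, which is exactly the polynomial $\sum_j Q_{i_1,\dots,i_{k+1}}^{(n)}(j)t^j$ of the statement, while $m_1!\cdots m_{k+1}!=(i_2-i_1)!\cdots(i_{k+1}-i_k)!\,(n-i_{k+1}+i_1)!=|W|$. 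This is the asserted formula. (As a sanity check, $d=1$, $k=0$ yields $2/n$, the bridge analogue of Sparre Andersen's discrete arcsine law.)

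The main obstacle is the ``fusion'' underlying Steps~1--2: one must verify that, upon quotienting by the random subspace $L$, the piece of the bridge before $i_1$ and the piece after $i_{k+1}$ --- which for a random \emph{walk} (Theorem~\ref{1249}) remain honest type-$B$ walks because their displacements $S_{i_1}$ and $S_n-S_{i_{k+1}}$ avoid $L$ --- here merge into a single type-$A$ bridge of $n-i_{k+1}+i_1$ steps, the identity $v_1+\dots+v_{k+1}=0$ forcing the wrap-around displacement into $L$; and that this fused family of projected bridges still satisfies both the product exchangeability and the general-position hypothesis needed to feed into Theorem~\ref{1435}. Everything after that is the reflection-group bookkeeping of Step~3, identical in spirit to the walk case.
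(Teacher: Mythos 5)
Your proposal is correct and follows essentially the same route as the paper's own proof: cut the closed path at the cyclic times $i_1<\dots<i_{k+1}$ into $k+1$ arcs (the last one wrapping through $S_n=S_0$), project onto the orthogonal complement of $\aff(S_{i_1},\dots,S_{i_{k+1}})$ (your quotient by $L$ is the same thing), identify the face event with non-absorption of $P_0$ by the $k+1$ projected bridges, and apply Theorem~\ref{1435} with $s=0$, $r=k+1$. Your conditioning on the arc multisets is just a slightly more explicit way of phrasing the paper's observation that within-arc permutations preserve both the joint law of the increments and the subspace $M$, so the two arguments coincide in substance.
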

\begin{remark}
For faces of maximal dimension $k=d-1$ the above formula simplifies to
$$
\P[\conv(S_{i_1},\dots,S_{i_{d}}) \in \mathcal F_{d-1}(C_n)]= \frac 2 {(i_2-i_1) \dots (i_d - i_{d-1}) (n-i_d + i_1)},
$$
which recovers a result obtained in~\cite{vysotsky_zaporozhets}.
\end{remark}
\begin{remark}
At the other extreme case, taking $k=0$ in Theorem~\ref{1249bridge} yields the following formula for the probability that $S_i$, where $0\leq i <n$, is a vertex of the convex hull $C_n$:
\begin{equation}\label{eq:exp_vertices_bridge}
\P[S_{i} \in \mathcal F_{0}(C_n)] = \frac 2 {n!} \left(\stirling{n}{d} + \stirling{n}{d-2}+\dots\right).
\end{equation}
Note that the result does not depend on $i$ which becomes quite straightforward  if one notices the cyclic exchangeability: $(\xi_1,\dots,\xi_n)$ has the same distribution as $(\xi_{i+1},\dots,\xi_n,\xi_1,\dots,\xi_{i-1})$ for all $i=0,\dots,n-1$.  Since $\stirling{n}{1} = (n-1)!$,  in the one-dimensional case $d=1$ formula~\eqref{eq:exp_vertices_bridge} reduces to the classical result of Sparre Andersen~\cite[Corollary~2]{sparre_andersen1} stating that
$$
\P[\max\{S_0,\dots,S_{n}\} = S_i]  = \frac 1 n, \quad i=0, \dots, n-1.
$$
\end{remark}

\subsection{Shift averages of face probabilities for general random walks. Connection to random bridges}
Finally, let us again turn to random walks. As was argued in Example~\ref{ex:non_symm_RW}, face probabilities for \emph{non-symmetric} exchangeable random walks do not enjoy distribution freeness. On the other hand, the next theorem states that certain shift averages of face probabilities are distribution-free.
\begin{theorem}\label{theo:1139}
Let $(S_i)_{i=0}^n$ be a random walk in $\R^d$ whose increments $\xi_1,\dots,\xi_n$ satisfy conditions $(\text{Ex})$ and $(\text{GP})$ but do not need to satisfy $(\pm\text{Ex})$.
Then, for all $0\leq k \leq d-1$ and for all indices $1\leq l_1 < \dots < l_k \leq n$,
\begin{multline*}
\frac 1 {n+1-l_k} \sum_{i=0}^{n-l_k} \P[\conv(S_{i}, S_{i+l_1},\dots, S_{i+l_{k}})\in \cF_k(C_n)]
\\=
\frac 1 {n+1} \sum_{i=0}^{n} \P[\conv(S_{i}, S_{i+l_1},\dots, S_{i+l_{k}})\in \cF_k(C_n)]
\\=
\frac{
2(Q_{0, l_1,\dots,l_{k}}^{(n+1)}(d-k-1) + Q_{0,l_1,\dots,l_{k}}^{(n+1)}(d-k-3)+\dots)
}{l_1! (l_2-l_1)!\dots  (l_{k}-l_{k-1})! (n+1-l_{k})!},
\end{multline*}
where in the second line we put $S_{i+l_j} = S_{(i+l_j)-(n+1)}$ if $i+l_j\geq n+1$.
\end{theorem}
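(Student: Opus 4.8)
The plan is to reduce the statement to Theorem~\ref{1249bridge} by turning the walk $(S_i)_{i=0}^n$ into a genuine exchangeable random bridge of length $n+1$. Put $\xi_{n+1}:=-S_n=-(\xi_1+\dots+\xi_n)$ and let $R$ be uniform on $\{0,1,\dots,n\}$ and independent of $(\xi_1,\dots,\xi_n)$. Let $(\zeta_1,\dots,\zeta_{n+1})$ be the cyclic rotation of $(\xi_1,\dots,\xi_n,\xi_{n+1})$ by $R$, let $\tilde S_j:=\zeta_1+\dots+\zeta_j$ for $0\le j\le n+1$, and $\tilde C_{n+1}:=\conv(\tilde S_0,\dots,\tilde S_{n+1})$. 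Then $\tilde S_0=\tilde S_{n+1}=0$, so $(\text{Br})$ holds; $(\text{GP}')$ for $(\zeta_i)$ follows from $(\text{GP})$ for $(\xi_i)$ by the computation in Remark~\ref{rem:simplicial}, since conditionally on $R$ every family $\tilde S_{i_1},\dots,\tilde S_{i_d}$ is a family of differences $S_a-S_b$ of partial sums with $0\le a,b\le n$. The crucial, and less obvious, point is that $(\zeta_1,\dots,\zeta_{n+1})$ is \emph{exchangeable}. Indeed $\mathfrak S_{n+1}=\langle C_{n+1},\mathfrak S_n\rangle$, where $C_{n+1}$ is the cyclic group and $\mathfrak S_n$ the stabilizer of the last coordinate (as $\mathfrak S_n$ is maximal in $\mathfrak S_{n+1}$ and $C_{n+1}\not\subseteq\mathfrak S_n$). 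The law $\mu$ of $(\xi_1,\dots,\xi_n,\xi_{n+1})$ is $\mathfrak S_n$-invariant, being the image of the exchangeable tuple $(\xi_1,\dots,\xi_n)$ under the map $(a_1,\dots,a_n)\mapsto(a_1,\dots,a_n,-a_1-\dots-a_n)$, which is symmetric in $a_1,\dots,a_n$; hence $g_*\mu$ depends only on the coset $g\mathfrak S_n$, that is on $g(n+1)\in\{1,\dots,n+1\}$, say $g_*\mu=\nu_{g(n+1)}$. As $c$ runs over $C_{n+1}$ the value $c(n+1)$ runs exactly once over $\{1,\dots,n+1\}$, so the law of $(\zeta_1,\dots,\zeta_{n+1})$ is $\frac1{n+1}\sum_{m=1}^{n+1}\nu_m$; this is visibly $C_{n+1}$-invariant, and it is $\mathfrak S_n$-invariant because $\sigma_*\nu_m=\nu_{\sigma(m)}$ for $\sigma\in\mathfrak S_n$ while $\sigma$ permutes $\{1,\dots,n+1\}$. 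Therefore it is $\mathfrak S_{n+1}$-invariant.

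For the middle identity I would argue as follows. Theorem~\ref{1249bridge}, applied to the bridge $(\tilde S_j)_{j=0}^{n+1}$ with indices $(i_1,\dots,i_{k+1})=(0,l_1,\dots,l_k)$ (legitimate since $l_k\le n<n+1$; here $i_{k+2}=n+1$), gives that $\P[\conv(\tilde S_0,\tilde S_{l_1},\dots,\tilde S_{l_k})\in\cF_k(\tilde C_{n+1})]$ equals exactly the right-hand side of the asserted formula. On the other hand, conditionally on $R=r$ one has $\tilde S_j=\hat S_{(j+r)\bmod(n+1)}-\hat S_r$, where $\hat S_m=S_m$ for $0\le m\le n$ and $\hat S_{n+1}=0$; hence $\tilde C_{n+1}=C_n-\hat S_r$ and the bridge event $\{\conv(\tilde S_0,\tilde S_{l_1},\dots,\tilde S_{l_k})\in\cF_k(\tilde C_{n+1})\}$ coincides with $\{\conv(S_r,S_{(l_1+r)'},\dots,S_{(l_k+r)'})\in\cF_k(C_n)\}$, where $(\cdot)'$ is reduction modulo $n+1$ into $\{0,\dots,n\}$. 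Averaging over $R$ yields precisely the middle expression, so it equals the right-hand side.

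For the first identity, let $P\in\{1,\dots,n+1\}$ be the position of the closing increment $\xi_{n+1}$ among $\zeta_1,\dots,\zeta_{n+1}$; it is a deterministic function of $R$, and is uniform on $\{1,\dots,n+1\}$. One checks that the event ``$P$ lies in the last arc $(l_k,n+1]$ cut out of $\{1,\dots,n+1\}$ by $0,l_1,\dots,l_k$'' is exactly $\{R\in\{0,\dots,n-l_k\}\}$, and that on this event the index set $\{r,(l_1+r)',\dots,(l_k+r)'\}$ from the previous step is the non-wrapping set $\{r,r+l_1,\dots,r+l_k\}$ with $r\in\{0,\dots,n-l_k\}$. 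Hence
$$
\P\big[\conv(\tilde S_0,\tilde S_{l_1},\dots,\tilde S_{l_k})\in\cF_k(\tilde C_{n+1})\ \big|\ P\in(l_k,n+1]\big]=\frac1{n+1-l_k}\sum_{i=0}^{n-l_k}\P\big[\conv(S_i,S_{i+l_1},\dots,S_{i+l_k})\in\cF_k(C_n)\big],
$$
so the left-hand side of the theorem equals this conditional probability, and it suffices to show that it equals the unconditional probability computed above. Tracking $P$ as a marking, the construction in the first step shows that the marked sequence $\big((\zeta_i)_i,P\big)$ is invariant under $\mathfrak S_{n+1}$ (with $P$ transported accordingly); rotating the chosen indices appropriately reduces the conditional probability given ``$P$ in a prescribed arc'' to the one given ``$P$ in the last arc'', but with the cyclic sequence of gaps permuted. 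One then uses that the Theorem~\ref{1249bridge} value depends only on the multiset of cyclic gaps, together with the fact that a walk face probability is symmetric in its ``interior'' gaps and that a uniform shift-average over the split of its two ``end'' blocks of total length $s-1$ produces the same answer as a single bridge block of length $s$ — in the reflection-group language of~\cite{KVZ15}, two type-$B$ factors shift-average to a type-$A$ factor, and a product of type-$A$ factors is manifestly symmetric. This forces all the per-arc conditional probabilities to coincide, so they equal the average computed in the previous step.

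The main obstacle is precisely this last point: the equality of the non-wrapping shift average with the cyclic one says that the walk average already ``sees'' the full bridge symmetry, and making this rigorous is where the exchangeability of the marked sequence $((\zeta_i)_i,P)$ and the product structure of the absorption probabilities (Theorem~\ref{1435}) have to be combined with care — it is a reflection-group incarnation of a Spitzer/cycle-lemma phenomenon. The first two steps, by contrast, are essentially a packaging of the bridge construction plus a direct invocation of Theorem~\ref{1249bridge}, and in particular require no symmetry or sign-change hypothesis on the increments.
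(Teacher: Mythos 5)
Your construction of the bridge by a uniform cyclic rotation of $(\xi_1,\dots,\xi_n,-S_n)$ is correct, and your verification of its exchangeability (via $\tau_*\nu_m=\nu_{\tau(m)}$ for $\tau\in\Sym(n+1)$) is sound; combined with the observation that, conditionally on $R=r$, the bridge face event coincides with the walk face event for the cyclically shifted indices, this correctly establishes the equality of the middle and right-hand expressions. This is the same mechanism as the paper's Proposition~\ref{prop:shift}, which averages over all of $\Sym(n+1)$ and groups permutations by the position $r$ with $\sigma(r)=n+1$; your cyclic-rotation version is an economical repackaging of that argument.

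The first equality is where your argument has a genuine gap. You correctly reduce it to showing that the conditional probability of the bridge face event, given that $\xi_{n+1}$ falls in the last arc $(l_k,n+1]$, equals the unconditional probability — equivalently, that all $k+1$ per-arc conditional probabilities coincide. But your justification of this coincidence invokes properties of the \emph{distribution-free formulas}: symmetry of the walk face probability in its interior gaps, and the identity by which a shift-average of two type-$B$ factors collapses to a type-$A$ factor. Those formulas (Theorem~\ref{1249}) are valid only under $(\pm\text{Ex})$, which is precisely the hypothesis this theorem drops; for a non-symmetric walk the individual face probabilities are not distribution-free (Example~\ref{ex:non_symm_RW}), and each per-arc conditional probability is a non-wrapping shift-average of such non-distribution-free quantities with a cyclically rotated gap sequence, so no formula is available to compare them. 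Worse, the statement that ``a uniform shift-average over the split of the two end blocks produces the same answer as a single bridge block,'' read at the level of probabilities rather than polynomials, \emph{is} the first identity you are trying to prove, so the argument is circular as stated. The paper's Proposition~\ref{1139} closes this step by a different device: it keeps $\xi_1,\dots,\xi_{l_k}$ fixed and reshuffles only $\xi_{l_k+1},\dots,\xi_n,\xi_{n+1}$ by a uniform $\bsigma'\in\Sym(n-l_k+1)$. The resulting bridge is \emph{not} exchangeable, so Theorem~\ref{1249bridge} cannot be quoted; instead one repeats its proof, noting that after projection onto $M$ the $k+1$ bridges still satisfy the invariance condition~\eqref{eq:invar_product} (only within-bridge permutations are required, and the last bridge's increments are exchangeable by the construction of $\bsigma'$), so that Theorem~\ref{1435} applies directly. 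Some such additional input is needed to make your final step rigorous.
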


According to Theorem~\ref{1249bridge}, the last expression is exactly the face probability of a random bridge of length $n+1$. This is due to a direct relation between convex hulls of random walks and random bridges, which is the essence of our proofs of Theorem~\ref{theo:expected_walk} and Theorem~\ref{theo:1139} (given below in Sections~\ref{sec:proof_numb_faces} and~\ref{sec:proof_shifted}, respectively). The main idea, explored in Section~\ref{subsec:walks_equals_bridge}, is to construct a random bridge from a non-symmetric random walk $S_0,\dots,S_n$ by adding the extra increment $\xi_{n+1} = -S_n$ and reshuffling the total $n+1$ increments randomly to enforce the exchangeability. For faces of maximal dimension $k=d-1$, a different proof of Theorem~\ref{theo:1139} (without the middle term)  was given by Vysotsky and Zaporozhets~\cite{vysotsky_zaporozhets}.

\section{Absorption probability for the joint convex hull}

\subsection{Connection to absorption probabilities}
Let us describe the idea of our proofs of Theorems~\ref{1249} an~\ref{1249bridge}. For concreteness, consider a symmetric random walk $(S_i)_{i=0}^n$ in the three-dimensional space $\R^3$. Given some $0\leq i_1 < i_2 \leq n$, we consider the probability that the segment $[S_{i_1},S_{i_2}]$ is an edge of the polytope $C_n = \conv(S_0,\dots,S_n)$. Denote by $l$ the line passing through the points $S_{i_1}$ and $S_{i_2}$, and let $h$ be any two-dimensional plane orthogonal to the line $l$. The intersection point of $l$ and $h$ is denoted by $P_0$.

Consider the orthogonal projection of the random walk $S_0,\dots,S_n$ on the plane $h$. Since the projection of the points $S_{i_1}$ and $S_{i_2}$ is $P_0$ (which we from now on view as the ``origin'' of the plane $h$),  we can split the projected random walk path into three components: the ``walk'' from the projection of $S_0$ to $P_0$ (which shall be time-reversed and sign-changed to be ``starting'' at $P_0$), the ``bridge'' from $P_0$ to $P_0$, and the ``walk'' from $P_0$ to the projection of $S_n$. The basic geometric observation underlying our proof of Theorems~\ref{1249} and \ref{1249bridge} is as follows: $[S_{i_1},S_{i_2}]$ is an edge of the convex hull $C_n$ if and only if
the point $P_0$ is a vertex of the joint convex hull of these three projected paths.
Thus, we need to compute the so-called \textit{non-absorption probability}, that is the probability that the interior of the joint convex hull of several random walks and bridges starting at the origin does not contain the origin. Problems of this type for just one random walk or random bridge were considered in our recent work~\cite{KVZ15}.

\subsection{Absorption probability for joint convex hulls}
Consider a collection of $s$ random walks and $r$ random bridges in $\R^d$ whose increments have a joint distribution invariant under the following transformations: we are allowed to perform any signed permutation of the increments inside any random walk, and any permutation of increments inside any random bridge. The next theorem provides a distribution-free formula for the probability that the joint convex hull of such random walks and bridges absorbs the origin. A particular case of this theorem was stated without proof in~\cite[Theorem~2.7]{KVZ15}. Before stating the theorem, we introduce necessary notation and assumptions.

Denote by $\Sym(n)$the symmetric group on a set of $n$ elements. Fix $s,r\in \N_0:= \N \cup \{0\}$ that do not vanish simultaneously, $n_1,\dots,n_s \in \N$, $m_1,\dots,m_r\in\N\bsl\{1\}$,  and consider $d$-dimensional random vectors
\begin{equation}\label{1942}
\xi_1^{(1)}, \dots, \xi_{n_1}^{(1)}, \;\; \dots,\;\; \xi_1^{(s)}, \dots, \xi_{n_s}^{(s)},\;\;
\eta_1^{(1)}, \dots, \eta_{m_1}^{(1)},\;\; \dots,\;\; \eta_1^{(r)}, \dots, \eta_{m_r}^{(r)}
\end{equation}
such that $\eta_1^{(j)}+ \dots+ \eta_{m_j}^{(j)}=0$ a.s.\ for every $1\leq j \leq r$.
Assume that for all permutations $\sigma^{(1)}\in \Sym(n_1)$, $\dots$, $\sigma^{(s)} \in \Sym(n_s)$, $\theta^{(1)}\in \Sym(m_1)$, $\dots$, $\theta^{(r)} \in \Sym(m_r)$ and all signs $\eps_1^{(1)}, \dots, \eps_{n_1}^{(1)}$, $\dots$, $\eps_1^{(s)}, \dots, \eps_{n_s}^{(s)} \in \{-1, +1\}$, we have the distributional equality
\begin{multline} \label{eq:invar_product}
\left(
\xi_1^{(1)}, \dots, \xi_{n_1}^{(1)}, \;\; \dots,\;\; \xi_1^{(s)}, \dots, \xi_{n_s}^{(s)},
\;\;
\eta_1^{(1)}, \dots, \eta_{m_1}^{(1)},\;\; \dots, \;\; \eta_1^{(r)}, \dots, \eta_{m_r}^{(r)}
\right)\\
\eqdistr
\left(
\eps_1^{(1)} \xi_{\sigma_1(1)}^{(1)}, \dots, \eps_{n_1}^{(1)} \xi_{\sigma_1(n_1)}^{(1)},
\;\;\dots, \;\;
\eps_{1}^{(s)}\xi_{\sigma_s(1)}^{(s)}, \dots, \eps_{n_s}^{(s)}\xi_{\sigma_s(n_s)}^{(s)},\right.
\\
\left.\eta_{\theta_1(1)}^{(1)}, \dots,  \eta_{\theta_1(m_1)}^{(1)},
\;\;\dots,\;\;
\eta_{\theta_r(1)}^{(r)}, \dots, \eta_{\theta_r(m_r)}^{(r)}
\right).
\end{multline}
Consider the collection of $s$ random walks $(S_l^{(1)})_{l=1}^{n_1},\dots, (S_l^{(s)})_{l=1}^{n_s}$ and $r$ random bridges $(R_l^{(1)})_{l=1}^{m_1},\dots, (R_l^{(r)})_{l=1}^{m_r}$ defined by
\begin{align*}
S_l^{(i)} &= \xi_1^{(i)} + \dots + \xi_{l}^{(i)}, \,  1\leq i \leq s,\,  1\leq l \leq n_i,\\
R_l^{(j)} &= \eta_1^{(j)} + \dots + \eta_{l}^{(j)}, \,  1\leq j \leq r,\,  1\leq l \leq m_j.
\end{align*}
Write $H$ for the joint convex hull of these walks and bridges, that is
\begin{equation}\label{eq:def_joint_convex_hull_H}
H=\conv \left(S_1^{(1)}, \dots, S_{n_1}^{(1)}, \;\; \dots,\;\; S_1^{(s)}, \dots, S_{n_s}^{(s)},
\;\;
R_1^{(1)}, \dots, R_{m_1-1}^{(1)}, \;\; \dots,\;\; R_1^{(r)}, \dots, R_{m_r-1}^{(r)}\right).
\end{equation}
\begin{theorem}\label{1435}
Assume that \eqref{eq:invar_product} holds and that any $d$ random vectors from the list on the right-hand side of~\eqref{eq:def_joint_convex_hull_H} are linearly independent with probability $1$. Then
\begin{equation}\label{eq:absorption}
\P[0\in H]=
\frac{2(P(d+1) + P(d+3)+\dots)}{2^{n_1} n_1!\dots 2^{n_s} n_s! m_1!\dots  m_r!},
\end{equation}
where the $P(j)$'s (which also depend on $s,r,n_1,\dots,n_s,m_1,\dots,m_r$) are the coefficients of the polynomial
\begin{equation}\label{eq:def_p_k}
\prod_{i=1}^s ((t+1)(t+3)\dots (t+2n_i-1))\times \prod_{l=1}^r ((t+1)(t+2)\dots (t+m_l-1)) = \sum_{j=0}^\infty P(j) t^j. 
\end{equation}
\end{theorem}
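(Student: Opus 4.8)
The plan is to reduce the absorption probability to a purely combinatorial count of Weyl chambers of a product-type reflection group that meet a generic linear subspace, in the spirit of~\cite{KVZ15}, and then evaluate that count via the characteristic polynomial of the corresponding hyperplane arrangement. First I would invoke the exact Wendel-type duality from~\cite{KVZ15}: under the invariance~\eqref{eq:invar_product}, the event $\{0\in H\}$ is, up to the general position null set, the complement of the event that all the random vectors on the right-hand side of~\eqref{eq:def_joint_convex_hull_H} lie in a common open halfspace through the origin. The point is that the configuration of increment-partial-sum vectors can be written as a fixed linear image $L$ (depending only on $s,r,n_1,\dots,n_s,m_1,\dots,m_r$ and on $d$) of the random increment vector living in $\mathbb R^{N}$ with $N = n_1+\dots+n_s+m_1+\dots+m_r$, and the group generated by the allowed signed permutations of walk-blocks and permutations of bridge-blocks is exactly the reflection group $W = B_{n_1}\times\cdots\times B_{n_s}\times A_{m_1-1}\times\cdots\times A_{m_r-1}$ acting on $\mathbb R^N$ (for a bridge, the constraint $\eta_1+\dots+\eta_m=0$ forces the relevant vectors into the reflection hyperplane of $A_{m-1}$). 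By the $W$-invariance of the joint distribution of the increments together with the general position hypothesis, the probability $\P[0\notin H]$ equals $C(L,W)/|W|$, where $C(L,W)$ is the number of Weyl chambers of $W$ whose image under $L$ is not contained in any halfspace — equivalently, by the geometric lemma of~\cite{KVZ15}, the number of chambers of $W$ intersected by a generic linear subspace of $\mathbb R^N$ of codimension $d$.

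Next I would compute that chamber count. The number of $W$-chambers met by a codimension-$d$ generic subspace is expressed through the characteristic polynomial $\chi_W(t)$ of the reflection arrangement of $W$, via the standard Zaslavsky/Klivans–Swartz type identity: a generic codimension-$d$ subspace meets exactly $2\bigl(b_d + b_{d+2} + \cdots\bigr)$ chambers, where $\chi_W(t) = \sum_j (-1)^{N-j} b_j\, t^{j}$ and, because the arrangement is central, the $b_j$ are nonnegative. Since the arrangement of a product of reflection groups is the product arrangement, $\chi_W(t)$ factorizes: $\chi_{B_n}(t) = (t-1)(t-3)\cdots(t-2n+1)$ and $\chi_{A_{m-1}}(t) = (t-1)(t-2)\cdots(t-m+1)$. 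Substituting $t\mapsto -t$ turns these into the signless polynomials $(t+1)(t+3)\cdots(t+2n-1)$ and $(t+1)(t+2)\cdots(t+m-1)$, whose product is exactly the polynomial $\sum_j P(j) t^j$ in~\eqref{eq:def_p_k}. Tracking the degree shift between the ambient dimension $N$ and the effective dimension of the essential part of the arrangement (each $A_{m_j-1}$ factor drops the rank by one because of the bridge constraint, which is precisely why the codimension in the geometric statement is $d$ rather than something shifted), the "met chambers" count becomes $2\bigl(P(d+1)+P(d+3)+\cdots\bigr)$. Dividing by $|W| = 2^{n_1}n_1!\cdots 2^{n_s}n_s!\, m_1!\cdots m_r!$ and taking the complement gives~\eqref{eq:absorption}; note the index $d+1$ rather than $d$ reflects that $0\in H$ is the complementary event and the alternating sum telescopes accordingly.

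The main obstacle, and the step deserving the most care, is the bookkeeping in the passage from the abstract $W$-chamber count in $\mathbb R^N$ to the correct index set $\{d+1,d+3,\dots\}$ in~\eqref{eq:absorption}: one must (i) verify that the linear map $L$ sending an increment vector to the list of partial sums in~\eqref{eq:def_joint_convex_hull_H} is itself in general position with respect to the chamber structure — so that ``$L$ of a chamber lies in a halfspace'' genuinely translates into ``a generic codimension-$d$ subspace misses that chamber'' — and (ii) handle the bridges, where the increments live on the subspace $\{\sum \eta_i = 0\}$, so the relevant arrangement is the \emph{essential} $A_{m-1}$ arrangement and the codimension count must be performed inside that subspace; gluing the walk and bridge factors correctly so that the total effective codimension is exactly $d$ is the delicate point. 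I expect this to follow the template of~\cite[Theorem~2.7]{KVZ15} closely, the new content being only the product structure, which behaves multiplicatively at the level of both $|W|$ and $\chi_W$; once the single-factor cases are in hand (already essentially done in~\cite{KVZ15}), the product case is a matter of assembling the pieces and checking the parity/degree arithmetic, which I would carry out by an induction on $s+r$ or directly from the product formula for characteristic polynomials.
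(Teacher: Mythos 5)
Your overall architecture is the paper's: use the invariance \eqref{eq:invar_product} to show each Weyl chamber of $G=B_{n_1}\times\cdots\times B_{n_s}\times A_{m_1-1}\times\cdots\times A_{m_r-1}$ contributes equally, average to turn the probability into the expected number of chambers met by a random linear subspace, check general position, and evaluate that count via a Zaslavsky-type theorem applied to the product characteristic polynomial. However, there is a concrete error in your duality step which, followed literally, produces the wrong formula. The correct chamber-by-chamber correspondence (the paper's Lemma~\ref{1436}) is that the \emph{absorption} event $\{0\in H\}$ coincides with the event that $(\Ker A)\cap L$ meets the chamber nontrivially: a nonzero point of $g\bar C$ lying in the kernel is exactly a nonnegative, not-all-zero combination of the partial sums equal to $0$. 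Hence $\P[0\in H]$ \emph{is} the met-chamber count divided by $\#G$, with no complementation anywhere. You instead pair the met-chamber count with $\P[0\notin H]$ and then ``take the complement'' at the end. Since $2\bigl(P(d+1)+P(d+3)+\cdots\bigr)+2\bigl(P(d-1)+P(d-3)+\cdots\bigr)=\#G$, complementing $2\bigl(P(d+1)+P(d+3)+\cdots\bigr)/\#G$ gives $2\bigl(P(d-1)+P(d-3)+\cdots\bigr)/\#G$, which is the non-absorption probability \eqref{eq:non_absorption}, not \eqref{eq:absorption}. So your closing explanation that ``the index $d+1$ rather than $d$ reflects that $0\in H$ is the complementary event'' is not where $d+1$ actually comes from, and the derivation as written does not reach \eqref{eq:absorption}.

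The index $d+1$ in fact arises as follows: $(\Ker A)\cap L$ has codimension $d+r$ in $\R^n$ (the extra $r$ coming from the bridge hyperplanes $L^{(j)}$), while the product arrangement satisfies $(-1)^n\chi_{\cA}(-t)=\sum_k P(k-r)t^k$ because each $A_{m_j-1}$ factor is non-essential; the two shifts by $r$ cancel, and Theorem~\ref{1229} — whose count starts at index $\mathrm{codim}+1$, not at $\mathrm{codim}$ as you wrote — yields $2\bigl(P(d+1)+P(d+3)+\cdots\bigr)$ directly for the absorption probability. Beyond this, the two genuinely technical points, namely the exact change of variables establishing the chamber-by-chamber identity for every fixed $g\in G$, and the verification that $(\Ker A)\cap L$ is almost surely in general position with respect to the product arrangement (this is precisely where the hypothesis that any $d$ vectors from the list in \eqref{eq:def_joint_convex_hull_H} are a.s.\ linearly independent is consumed), are only deferred to the template of \cite{KVZ15}; those are where most of the actual work in the paper's proof lives.
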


\begin{remark}
Theorem~\ref{1435} computes the so-called \emph{absorption probability}. The non-absorption probability is given by
\begin{equation}\label{eq:non_absorption}
\P[0\notin H]=
\frac{2(P(d-1) + P(d-3)+\dots)}{2^{n_1} n_1!\dots 2^{n_s} n_s! m_1!\dots  m_r!},
\end{equation}
with the usual convention $P(j):=0$ for $j < 0$. To see the equivalence of~\eqref{eq:absorption} and~\eqref{eq:non_absorption} note that
$$
\sum_{j=0}^{\infty} P(j) = 2^{n_1} n_1!\dots 2^{n_s} n_s! m_1!\dots  m_r!,
\quad
\sum_{j=0}^{\infty} (-1)^j P(j) = 0,
$$
obtained by taking $t=+1$ and $t= -1$ in~\eqref{eq:def_p_k}.
\end{remark}

\begin{remark}\label{rem:zero included}
We can include the joint starting point $0$ to the joint convex hull and consider $H_0:= \conv(H, 0)$. Then, under the general position assumption of Theorem~\ref{1435}, we have
$$\P[0 \notin H] = \P[0 \in \mathcal F_0(H_0)].$$
\end{remark}

\begin{remark}\label{rem:non_gen_position_absorption}
Without the general position condition, it holds that
\begin{equation*}
\P[0\in \Int H]\leq
\frac{2(P(d+1) + P(d+3)+\dots)}{2^{n_1} n_1!\dots 2^{n_s} n_s! m_1!\dots  m_r!}
\leq
\P[0\in H],
\end{equation*}
where $\Int H$ is the interior of $H$. We omit the proof of these inequalities because it is analogous to the proof of Proposition~2.10 in~\cite{KVZ15}.
\end{remark}

\section{Proof of Theorem~\ref{1435}}

\subsection{Symmetry groups and Weyl chambers}
In our recent work~\cite{KVZ15} we showed that absorption probabilities for random walks with symmetrically distributed increments (respectively, random bridges) can be interpreted geometrically using Weyl chambers of type $B_n$ (respectively, $A_{n-1}$).  We shall extend these ideas by  showing that Theorem~\ref{1435} concerning the convex hull of \emph{several} walks and bridges can be interpreted in terms of Weyl chambers corresponding to the \emph{direct product} of several reflection groups. The possibility of extension to direct products was mentioned without proof in Theorem~2.7 of~\cite{KVZ15}.
We start by recalling some relevant definitions.

The \emph{reflection group of type $B_n$} is the symmetry group of the regular cube $[-1,1]^n$ (or of its dual, the regular crosspolytope). The elements of this group act on $\R^n$ by permuting the coordinates in arbitrary way and multiplying any number of coordinates by $-1$. The number of elements of this group is $2^n n!$. We shall not distinguish between an abstract group and its action because this convenient for our purposes.

The \emph{reflection group of type $A_{n-1}$} is the symmetric group $\Sym(n)$ which acts on $\R^{n}$ by permuting the coordinates. The number of elements of this group is $n!$. The action of this group leaves the following hyperplane invariant:
$$
L_n= \{(x_1,\dots,x_{n})\in \R^{n}\colon x_1+\dots+x_{n} = 0\},
$$
which explains why the subscript $n-1$ rather than $n$ appears in the standard notation $A_{n-1}$.
Note that the group $A_{n-1}$ is the symmetry group of the regular simplex with $n$ vertices (defined as the convex hull of the standard basis in $\R^{n}$).

The {\it fundamental Weyl chambers} of type $A_{n-1}$ and $B_n$ are the following convex cones in $\R^n$:
\begin{align*}
\cC(A_{n-1}) &:=\{(x_1,\dots,x_{n})\in \R^n \colon x_1<x_2<\dots < x_{n}\},\\
\cC(B_n) &:= \{(x_1,\dots,x_n)\in\R^n\colon 0< x_1<x_2<\dots < x_n\}.
\end{align*}
Observe that $\cC(A_{n-1})$ is a {\it fundamental domain} for the reflection group $A_{n-1}$. This means that the cones of the form $g\cC(A_{n-1})$, $g \in A_{n-1}$, are pairwise disjoint and the union of their closures constitutes $\R^n$. The cones $g\cC(A_{n-1})$ or their closures will be referred to as \emph{Weyl chambers of type $A_{n-1}$}. Similarly, the cone $\cC(B_n)$ is a fundamental domain for the reflection group $B_n$, and the closures of the cones $g\cC(B_n)$, $g\in B_n$, are called \emph{Weyl chambers of type $B_n$}. Note that there are $n!$ Weyl chambers of type $A_{n-1}$ and $2^n n!$ Weyl chambers of type $B_n$.

In the sequel, a fundamental role will be played by the following \emph{reflection group of  direct product type}:
$$
G := B_{n_1} \times \dots \times B_{n_s}\times A_{m_1-1} \times \dots \times A_{m_r-1}.
$$
This group acts on $\R^{n_1}\times \dots \times \R^{n_s}\times \R^{m_1}\times \dots \times \R^{m_r} \equiv \R^n$,  where
$$
n=n_1+\dots+n_s+m_1+\dots+m_r,
$$
in the following natural way. Let  $e_1^{(i)},\dots, e_{n_i}^{(i)}$ be the standard basis of $\R^{n_i}$ (for all $1\leq i\leq s$) and let $f_1^{(j)},\dots, f_{m_j}^{(j)}$ be the standard basis of $\R^{m_j}$ (for all $1\leq j\leq r$). Then the elements of $G$ can be represented as tuples of the form
\begin{equation}\label{eq:g_def}
g=(g_{\sigma^{(1)}, \eps^{(1)}}, \dots, g_{\sigma^{(s)}, \eps^{(s)}},h_{\theta^{(1)}}, \dots, h_{\theta^{(r)}}),
\end{equation}
where: $\sigma^{(i)}\in \Sym(n_i),\theta^{(j)}\in \Sym(m_j)$ are permutations; $\eps^{(i)}:=(\eps_1^{(i)},\dots,\eps_{n_i}^{(i)})\in \{-1,+1\}^{n_i}$ are signs;  each $g_{\sigma^{(i)}, \eps^{(i)}}$ is the orthogonal transformation of $\R^{n_i}$ defined by
\begin{equation}\label{eq:g_def1}
g_{\sigma^{(i)},\eps^{(i)}} (e_k^{(i)}) = \eps_k^{(i)} e_{\sigma^{(i)}(k)}^{(i)}, \quad k=1,\dots,n_i;
\end{equation}
and each $h_{\theta^{(j)}}$ is the orthogonal transformation of $\R^{m_j}$ defined by
\begin{equation}\label{eq:g_def2}
h_{\theta^{(j)}} (f_l^{(j)}) = f_{\theta^{(j)}(l)}^{(j)}, \quad l=1,\dots,m_j.
\end{equation}
The total number of elements in the group  $G$ is $2^{n_1} n_1!\dots 2^{n_s} n_s!m_1!\dots  m_r!$.

\subsection{Absorption probability and subspaces intersecting Weyl chambers}
Consider the (open) Weyl chambers
\begin{align*}
C_B^{(i)} &:= \cC(B_{n_i}) = \{(x_1^{(i)}, \dots, x_{n_i}^{(i)})\in \R^{n_i} \colon 0< x_1^{(i)}< \dots < x_{n_i}^{(i)}\} \subset \R^{n_i},\\
C_A^{(j)} &:=\cC(A_{m_j-1}) = \{(y_1^{(j)}, \dots, y_{m_j}^{(j)})\in \R^{m_j} \colon y_1^{(j)}< \dots < y_{m_j}^{(j)}\} \subset \R^{m_j}
\end{align*}
and their direct product
$$
C:= C_B^{(1)}\times \dots \times C_B^{(s)}\times C_A^{(1)}\times \dots \times C_A^{(r)} \subset \R^{n_1}\times \dots \times \R^{n_s}\times \R^{m_1}\times \dots \times \R^{m_r}\equiv \R^n.
$$
Let $\bar C$ denote the closure of $C$.
Note that $C$ is a fundamental domain for the action of $G$ on $\R^n$. The closed convex cones $g \bar C$, where $g\in G$, are called \emph{Weyl chambers (of product type)}.
Let $L^{(j)}$ be the hyperplane invariant under the action of the group $A_{m_j-1}$:
\begin{equation}\label{eq:L_m_j_def}
L^{(j)}= \{(y_1,\dots,y_{m_j})\in \R^{m_j}\colon y_1+\dots+y_{m_j} = 0\}, \quad 1\leq j\leq r,
\end{equation}
and consider the linear subspace
$$
L:= \R^{n_1}\times \dots \times \R^{n_s}\times L^{(1)}\times \dots \times L^{(r)} \subset \R^n.
$$
Note that the action of $G$ leaves $L$ invariant. Let $A$ be a $d\times n$-matrix with the columns
\begin{equation}\label{eq:list_of_vectors}
\xi_1^{(1)},\dots,\xi_{n_1}^{(1)},\;\; \dots,\;\; \xi_1^{(s)},\dots,\xi_{n_s}^{(s)},\;\;
\eta_1^{(1)}, \dots, \eta_{m_1}^{(1)},\;\; \dots,\;\; \eta_1^{(r)}, \dots, \eta_{m_r}^{(r)}.
\end{equation}
We can view $A: \R^n \to \R^d$ as a linear operator mapping the standard basis of $\R^n$, namely
\begin{equation}\label{eq:standard_basis}
e_1^{(1)},\dots,e_{n_1}^{(1)},\;\;\dots,\;\; e_1^{(s)},\dots,e_{n_s}^{(s)},\;\;
f_1^{(1)}, \dots, f_{m_1}^{(1)},\;\; \dots,\;\; f_1^{(r)}, \dots, f_{m_r}^{(r)},
\end{equation}
to the vectors listed in~\eqref{eq:list_of_vectors}, respectively. The next lemma states that the absorption probability equals  the probability that the random linear subspace $(\Ker A)\cap L$ intersects any given Weyl chamber $g \bar C$ in a non-trivial way.
\begin{lemma}\label{1436}
Under the assumptions of Theorem~\ref{1435}, for every $g \in G$,
\begin{multline*}
\P[0\in \conv (S_1^{(1)}, \dots, S_{n_1}^{(1)},\;\; \dots,\;\; S_1^{(s)}, \dots, S_{n_s}^{(s)},\;\;
R_1^{(1)}, \dots, R_{m_1-1}^{(1)},\;\;\dots,\;\; R_1^{(r)}, \dots, R_{m_r-1}^{(r)})]
\\=\P[(\Ker A) \cap  L\cap (g\bar C) \neq \{0\}].
\end{multline*}
\end{lemma}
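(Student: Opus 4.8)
The plan is to translate the geometric statement "$0$ belongs to the joint convex hull" into a statement about the kernel of the matrix $A$ meeting a given Weyl chamber, and this breaks naturally into two reductions. First, a general-position / Wendel-type observation: for points $v_1,\dots,v_N$ in $\R^d$ in sufficiently general position, $0 \in \conv(v_1,\dots,v_N)$ if and only if there exist coefficients $\alpha_1,\dots,\alpha_N \ge 0$, not all zero, with $\sum \alpha_j v_j = 0$; equivalently, $(\Ker A)$ meets the nonnegative orthant $\{\alpha \ge 0\}$ in a point other than the origin. Here $A$ is the $d\times n$ matrix whose columns are the listed increments $\xi^{(i)}_k$ and $\eta^{(j)}_l$. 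The subtlety is that the points appearing in $H$ are the \emph{partial sums} $S^{(i)}_l$ and $R^{(j)}_l$, not the increments themselves, so I first rewrite $0 \in H$ in terms of the increments. Collecting the coefficient of each increment vector, one finds that $0 \in \conv(S^{(i)}_\bullet, R^{(j)}_\bullet)$ (with the convention $R^{(j)}_0 = R^{(j)}_{m_j} = 0$, which lets us include the starting/terminal point of each bridge) is equivalent to the existence of a nonzero vector in $\Ker A$ whose coordinates, written in the blocks corresponding to the walks and bridges, are \emph{monotone} in a suitable sense rather than merely nonnegative. The precise bookkeeping is: if $\beta = (\beta^{(1)}_1,\dots,\beta^{(s)}_{n_s}, \gamma^{(1)}_1,\dots,\gamma^{(r)}_{m_r})$ denotes the coordinates in $\R^n$ of a kernel vector, then the convex combination coefficients of the $S$'s and $R$'s are the successive \emph{differences} of the $\beta$'s and $\gamma$'s; nonnegativity of all these differences together with the affine constraint $\sum = 1$ is exactly the condition that each walk-block of $\beta$ be nondecreasing and each bridge-block of $\gamma$ sum to a fixed value. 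This is where the subspace $L$ and the two chamber types enter: requiring the bridge-block to lie in $L^{(j)}$ (sum zero) is forced by the bridge relation $\eta^{(j)}_1 + \dots + \eta^{(j)}_{m_j} = 0$, and the monotonicity is exactly the defining inequality of $\cC(B_{n_i})$ on a walk-block and of $\cC(A_{m_j-1})$ on a bridge-block.

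Concretely, the key steps in order are: (1) Rewrite $0 \in \conv(\dots)$ via the general-position lemma as "$\Ker A$ contains a nonzero vector $w$ with $Aw=0$ and $w$ in the appropriate (half-)orthant." (2) Perform the Abel summation / change of variables from increment-coefficients to partial-sum-coefficients; this is a linear bijection between the nonnegative cone of convex-combination weights and (the intersection with $L$ of) the closed fundamental chamber $\bar C$. Because $\sum \eta^{(j)} = 0$, the map $A$ already kills any vector whose bridge-blocks have been shifted by a constant, so one has the freedom to normalise each bridge-block to lie in $L^{(j)}$ without changing membership in $\Ker A$; this pins down $L$. (3) Conclude that $0 \in H$ if and only if $(\Ker A)\cap L \cap \bar C \neq \{0\}$, which is the case $g = \mathrm{id}$. (4) Upgrade from $g=\mathrm{id}$ to arbitrary $g \in G$ using the distributional invariance~\eqref{eq:invar_product}: the action of $g$ on $\R^n$ transforms the matrix $A$ into $A' = A g^{-1}$, whose columns are the signed/permuted increments; by~\eqref{eq:invar_product} the collection $A'$ has the same distribution as $A$, while $\Ker(Ag^{-1}) = g(\Ker A)$. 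Since $g$ maps $\bar C$ to $g\bar C$ and preserves $L$, the event $\{(\Ker A)\cap L\cap (g\bar C)\neq\{0\}\}$ has the same probability as $\{(\Ker A')\cap L\cap \bar C \neq \{0\}\}$, which by the distributional equality equals the probability of $\{(\Ker A)\cap L\cap \bar C \neq\{0\}\}$, already identified with the absorption probability. A small point to check is that the general-position hypothesis of Theorem~\ref{1435} (any $d$ of the listed vectors are linearly independent a.s.) guarantees both that the Wendel-type equivalence in step (1) holds without boundary ambiguities and that $\Ker A$ is a.s.\ of dimension exactly $n-d$, so that intersecting with the codimension-$r$ subspace $L$ and with the chamber behaves as expected.

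The main obstacle I expect is step (2): getting the Abel-summation dictionary exactly right, including the role of the omitted terminal points $S^{(i)}_0 = 0$ and $R^{(j)}_{m_j} = 0$ (which do not appear in $H$ as written but implicitly contribute the "free" direction), and verifying that the induced map between the weight simplex and $\bar C \cap L$ is genuinely a bijection onto the relevant set rather than merely a surjection or injection. One has to be careful that "not all weights zero" corresponds precisely to "the kernel vector is nonzero", and that the normalisation $\sum \alpha = 1$ versus the scaling-invariant cone condition match up — i.e.\ that one may work projectively, replacing the simplex of weights by the full nonnegative cone, which is what makes the chamber description (a cone, not a polytope) appropriate. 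Once this dictionary is pinned down for a single walk and a single bridge, the direct-product structure is purely formal: the blocks do not interact, the matrix $A$ acts block-diagonally on the standard basis, and the chamber $C$, the subspace $L$, and the group $G$ are all honest direct products, so the one-block computation of~\cite{KVZ15} extends verbatim. The invariance argument in step (4) is then routine bookkeeping with the explicit description~\eqref{eq:g_def}--\eqref{eq:g_def2} of the elements of $G$.
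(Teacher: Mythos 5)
Your proposal is correct and follows essentially the same route as the paper's proof of Lemma~\ref{1436}: the Abel-summation dictionary between convex-combination weights of the partial sums and coordinates in a Weyl chamber of product type, the use of the bridge relations $\sum_l \eta^{(l)}_\bullet = 0$ to reduce to the subspace $L$, and the distributional invariance~\eqref{eq:invar_product} to pass between the fundamental chamber and an arbitrary $g\bar C$ (the paper absorbs $g$ into the matrix as $Ag$ at the outset, whereas you handle it at the end — a cosmetic difference). The bijectivity of the change of variables that you flag as the main obstacle is exactly the correspondence $x_k = \tilde x_1 + \dots + \tilde x_k$ written out in the paper, and it works just as you anticipate.
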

\begin{proof}
We are interested in the probability of the event
$$
E := \{(\Ker A) \cap L \cap  (g\bar C) \neq \{0\}\}= \{ \Ker (Ag) \cap L \cap \bar C \neq \{0\}\}.
$$
Recall that $g:\R^n\to\R^n$ is a linear operator given by~\eqref{eq:g_def}, \eqref{eq:g_def1}, \eqref{eq:g_def2}. The columns of the matrix $Ag$ are
\begin{align*}
&\eps_1^{(1)} \xi_{\sigma^{(1)}(1)}^{(1)},  \dots, \eps_{n_1}^{(1)} \xi_{\sigma^{(1)}(n_1)}^{(1)},
\;\;\dots, \;\;
\eps_{1}^{(s)}\xi_{\sigma^{(s)}(1)}^{(s)}, \dots, \eps_{n_s}^{(s)}\xi_{\sigma^{(s)}(n_s)}^{(s)},\\
& \eta_{\theta^{(1)}(1)}^{(1)}, \dots, \eta_{\theta^{(1)}(n_1)}^{(1)},
\;\;\dots,\;\; \eta_{\theta^{(r)}(1)}^{(r)}, \dots, \eta_{\theta^{(r)}(m_r)}^{(r)},
\end{align*}
as one can easily check by computing the action of $A g$ on the standard basis of $\R^n$; see~\eqref{eq:standard_basis}. So, we can write the event $E$ in the form
\begin{multline}\label{eq:E_product}
E = \Big\{\exists (x^{(1)},\dots, x^{(s)},y^{(1)},\dots, y^{(r)}) \in (\bar C_B^{(1)}\times \dots \times \bar C_B^{(s)}\times \bar C_A^{(1)}\times \dots \times \bar C_A^{(r)})\cap (L \bsl\{0\})
\colon\\
\sum_{i=1}^{s} \left(\eps_1^{(i)} \xi^{(i)}_{\sigma^{(i)}(1)} x_1^{(i)} + \dots + \eps_{n_i}^{(i)} \xi^{(i)}_{\sigma^{(i)}(n_i)} x_{n_i}^{(i)}\right)
+\sum_{j=1}^{r} \left( \eta^{(j)}_{\theta^{(j)}(1)} y_1^{(j)} + \dots + \eta^{(j)}_{\theta^{(j)}(m_j)} y_{m_j}^{(j)}\right)= 0\Big\}.
\end{multline}

For every $1 \le i \le s$, there is a bijective correspondence between $x^{(i)}= (x_1^{(i)},\dots,x_{n_i}^{(i)})\in \bar C_B^{(i)}$ and $\tilde{x}^{(i)} = (\tilde{x}_1^{(i)},\dots,\tilde{x}_{n_i}^{(i)})\in \R_{\geq 0}^{n_i}$ given by
$$
x_1^{(i)}=\tilde{x}_1^{(i)}, \;\; x_2^{(i)}=\tilde{x}_1^{(i)}+\tilde{x}_2^{(i)},\;\; \dots, \;\;  x_{n_i}^{(i)}= \tilde{x}_{1}^{(i)}+\dots+\tilde{x}_{n_i}^{(i)}.
$$
Similarly, there is a bijective correspondence between $y^{(j)}= (y_1^{(i)},\dots,y_{m_j}^{(j)})\in \bar C_A^{(j)}\cap L^{(j)}$ and $\tilde{y}^{(j)} = (\tilde{y}_1^{(j)},\dots,\tilde{y}_{m_j-1}^{(j)})\in \R_{\geq 0}^{m_j-1}$ given by
$$
\tilde{y}_1^{(j)}=y_2^{(j)}-y_1^{(j)},\;\; \dots, \;\; \tilde{y}_{m_j-1}^{(j)}=y_{m_j}^{(j)}-y_{m_j-1}^{(j)},
$$
or, equivalently,
$$
y_1^{(j)}=\tilde{y}_0^{(j)}, \;\; y_2^{(j)}=\tilde{y}_0^{(j)}+\tilde{y}_1^{(j)},\;\; \dots, \;\;
y_{m_j}^{(j)}= \tilde{y}_{0}^{(j)} + \tilde{y}_{1}^{(j)}+\dots+\tilde{y}_{m_j-1}^{(j)},
$$
where $\tilde{y}_0^{(j)}\in\R$ is chosen such that the condition $y_1^{(j)}+\dots+y_{m_j}^{(j)}=0$ holds.
Thus, we have
\begin{multline*}
E = \Big\{\exists (\tilde{x}^{(1)},\dots, \tilde{x}^{(s)},\tilde{y}^{(1)},\dots, \tilde{y}^{(r)}) \in
(\R_{\geq 0}^{n_1}\times \dots \times \R_{\geq 0}^{n_s}\times \R_{\geq 0}^{m_1-1}\times \dots \times \R_{\geq 0}^{m_r-1})\bsl \{0\}
\colon\\
\sum_{i=1}^{s} \sum_{k=1}^{n_i} \tilde{x}_k^{(i)} \left(\eps_k^{(i)} \xi^{(i)}_{\sigma^{(i)}(k)} + \dots + \eps_{n_i}^{(i)} \xi^{(i)}_{\sigma^{(i)}(n_i)}\right)
+\sum_{j=1}^{r} \sum_{l=1}^{m_j-1} \tilde{y}_l^{(j)} \left( \eta^{(j)}_{\theta^{(j)}(l+1)} + \dots +  \eta^{(j)}_{\theta^{(j)}(m_j)}\right)
 = 0\Big\}
\end{multline*}
modulo null sets, where we omitted the terms
$$
\tilde{y}_0^{(j)} \left(\eta^{(j)}_{\theta^{(j)}(1)} + \dots +  \eta^{(j)}_{\theta^{(j)}(m_j)}\right)
=
\tilde{y}_0^{(j)} \left(\eta_1^{(j)}+ \dots+ \eta_{m_j}^{(j)}\right) =0
\quad
\text{a.s.}, \quad 1 \le j \le r,
$$
which vanish by the bridge condition \eqref{1942}.
The invariance assumption~\eqref{eq:invar_product} implies the distributional equality
\begin{multline}
\left(\Big\{\eps_k^{(i)} \xi^{(i)}_{\sigma^{(i)}(k)} + \dots + \eps_{n_i}^{(i)}
\xi^{(i)}_{\sigma^{(i)}(n_i)}\Big\}_{\substack{i=1,\dots,s\\k=1,\dots, n_i}},
\Big\{\eta^{(j)}_{\theta^{(j)}(l+1)} + \dots +  \eta^{(j)}_{\theta^{(j)}(m_j)}\Big\}_{\substack{j=1,\dots,r\\l=1,\dots, m_j-1}}
\right)
\\
\eqdistr
\left(
\Big\{S_{n_i-k+1}^{(i)}\Big\}_{\substack{i=1,\dots,s\\k=1,\dots, n_i}},
\Big\{R_{m_j-l}^{(j)}\Big\}_{\substack{j=1,\dots,r\\l=1,\dots, m_j-1}}
\right).
\end{multline}
Therefore,
\begin{multline*}
\P[E]
= \P\Big[\exists (\tilde{x}^{(1)},\dots, \tilde{x}^{(s)},\tilde{y}^{(1)},\dots, \tilde{y}^{(r)}) \in
(\R_{\geq 0}^{n_1}\times \dots \times \R_{\geq 0}^{n_s}\times \R_{\geq 0}^{m_1-1}\times \dots \times \R_{\geq 0}^{m_r-1})\bsl \{0\}
\colon \\
\sum_{i=1}^s \left(\tilde{x}_1^{(i)} S_{n_i}^{(i)} + \tilde{x}_2^{(i)} S_{n_i-1}^{(i)} + \dots + \tilde{x}_{n_i}^{(i)} S_1^{(i)}\right)
\\+\sum_{j=1}^r \left(\tilde{y}_1^{(j)} R_{m_j-1}^{(j)} + \tilde{y}_2^{(j)} R_{m_j-2}^{(j)} + \dots + \tilde{y}_{m_j-1}^{(j)} R_1^{(j)}\right) = 0\Big].
\end{multline*}
The term on the right-hand side is the probability that the joint convex hull of the walks $S_k^{(i)}$, $1\leq k\leq n_i$, $1\leq i\leq s$, and the bridges $R_l^{(j)}$, $1\leq l\leq m_j-1$, $1\leq j\leq r$, contains $0$. This proves the lemma.
\end{proof}

\subsection{Hyperplane arrangements}
Now we need some results from the theory of hyperplane arrangements~\cite{OT92,rS07}.
A \emph{linear hyperplane arrangement} (or simply ``\textit{arrangement}'') $\cA$ is a finite set of distinct hyperplanes in $\R^n$ that pass through the origin.
The \emph{rank} of an arrangement $\cA$ is the codimension of the intersection of all hyperplanes in the arrangement:
$$
\rank(\cA)=n-\dim\left(\bigcap_{H\in\cA}H\right).
$$
Equivalently, the rank is  the dimension of the space spanned by the normals to the hyperplanes in $\cA$.
The \emph{characteristic polynomial} $\chi_{\cA}(t)$ of the arrangement $\cA$ is defined by
\begin{equation}\label{1459}
\chi_{\cA}(t)=\sum_{\cB\subset\cA}(-1)^{\#\cB}t^{n-\rank(\cB)},
\end{equation}
where $\#\cB$ denotes the number of elements in the set $\cB$, and $\rank(\varnothing) = 0$ under convention that the intersection over the empty set of hyperplanes is $\R^n$. The original definition of the characteristic polynomial uses the notions of the intersection poset of $\cA$ and the M\"obius function on it; see~\cite[Section~1.3]{rS07}. The equivalence of both definitions was proved by Whitney; see, e.g., \cite[Lemma~2.3.8]{OT92} or~\cite[Theorem~2.4]{rS07}.

Denote by $\cR(\cA)$  the finite set of open connected components (``\emph{regions}'' or ``\emph{chambers}'') of the complement $\R^n\setminus\cup_{H\in\cA} H$ of the hyperplanes. 
The following fundamental result due to Zaslavsky~\cite{tZ75} (see also~\cite[Theorem~2.5]{rS07}) expresses the number of regions of the arrangement $\cA$ in terms of its characteristic polynomial:
\begin{equation}\label{1112}
\# \cR(\cA)=(-1)^n\chi_{\cA}(-1).
\end{equation}


The \emph{lattice} $\cL(\cA)$ generated by an arrangement $\cA$ in $\R^n$ consists of all linear subspaces that can be represented as intersections of some of the hyperplanes from $\cA$, that is
$$
\cL(\cA) = \left\{\bigcap_{H\in\cB}H \colon \cB \subset \cA\right\}.
$$
By definition, $\R^n\in \cL(\cA)$, corresponding to the empty intersection over $\cB=\varnothing$. Let $M_{n-d}$ be a linear subspace in $\R^n$ of codimension $d\leq n-1$. We say that $M_{n-d}$ is in \emph{general position} with respect to $\cA$ if for all $K\in \cL(\cA)$,
\begin{equation}\label{1222}
\dim (M_{n-d}\cap K) =
\begin{cases}
\dim K -  d, &\text{if } \dim K \geq d,\\
0, &\text{if } \dim K \leq d.
\end{cases}
\end{equation}
The next theorem provides a formula for the number of regions in $\cR(\cA)$ intersected by a linear subspace in general position. We refer to~\cite[Theorem~3.3 and Lemma~3.5]{KVZ15} for its proof.
\begin{theorem}\label{1229}
Let $M_{n-d}$ be a linear subspace in $\R^n$ of codimension $d$ that is in general position w.r.t.\ to a linear hyperplane arrangement $\cA$. Let
\begin{equation}\label{eq:chi_def}
\chi_{\cA}(t)=\sum_{k=0}^n (-1)^{n-k} a_kt^k
\end{equation}
be the characteristic polynomial of $\cA$. Then, the number of regions in $\cR(\cA)$ intersected by $M_{n-d}$ is given by
\begin{align*}
\#\{R\in \cR(\cA)\colon R\cap M_{n-d}\ne\varnothing\}
&=
\#\{R\in \cR(\cA)\colon \overline R\cap M_{n-d}\ne \{0\}\}\\
&=
2(a_{d+1} + a_{d+3} +\dots)
,
\end{align*}
where we put $a_k=0$ for $k\notin\{0,\dots,n\}$.
\end{theorem}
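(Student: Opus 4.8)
The plan is to slice the ambient arrangement $\cA$ by $M:=M_{n-d}$, reduce the count to a genuine arrangement living inside $M$, apply Zaslavsky's theorem \eqref{1112} there, and finally re-express the answer through the coefficients $a_k$ of $\chi_{\cA}$ by comparing intersection lattices. I would first introduce the \emph{induced arrangement} $\cA^M:=\{H\cap M\colon H\in\cA\}$ inside $M\cong\R^{n-d}$; by general position \eqref{1222} each $H\cap M$ is a genuine hyperplane of $M$ (of dimension $n-1-d$). The basic observation is that a chamber $R\in\cR(\cA)$ meets $M$ if and only if $R\cap M$ is a chamber of $\cA^M$: since $R$ is an open convex cone and $M$ a subspace, $R\cap M$ is open and convex in $M$, hence a connected component of $M\setminus\bigcup_H(H\cap M)$, and conversely each such component lies in a unique $R$. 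This yields a bijection, so $\#\{R\colon R\cap M\neq\varnothing\}=\#\cR(\cA^M)$. The two region counts in the statement then agree: $R\cap M\neq\varnothing$ forces $\overline R\cap M\neq\{0\}$ (as $0\notin R$), and the reverse implication is exactly where \eqref{1222} enters, since a positive-dimensional convex cone $\overline R\cap M$ cannot be covered by the finite union of lower-dimensional traces $H\cap M$ without being confined to a single flat that general position forbids; hence $\overline R\cap M$ contains a point off every hyperplane, which lies in $R$.

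Next I would apply Zaslavsky's formula \eqref{1112} to $\cA^M$, an arrangement in the $(n-d)$-dimensional space $M$, giving $\#\cR(\cA^M)=(-1)^{n-d}\chi_{\cA^M}(-1)$, so that everything reduces to computing $\chi_{\cA^M}(-1)$. For this I would use the intersection-poset form of the characteristic polynomial (the original definition referenced after \eqref{1459}), namely $\chi_{\cA}(t)=\sum_{K\in\cL(\cA)}\mu(\R^n,K)\,t^{\dim K}$, together with the coefficient reading $\sum_{\dim K=k}\mu(\R^n,K)=(-1)^{n-k}a_k$ coming from \eqref{eq:chi_def}. The crucial structural input is that the slicing map $\phi\colon K\mapsto K\cap M$ is a rank-preserving poset isomorphism from $\{K\in\cL(\cA)\colon\dim K>d\}$ onto $\cL(\cA^M)$ with its top element $\{0\}$ removed, whereas every flat with $\dim K\leq d$ is collapsed by $\phi$ to that single top element. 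Injectivity on the high-dimensional flats and the identity $\codim_M(K\cap M)=\codim_{\R^n}K$ both follow by applying \eqref{1222} to $K$ and to intersections $K_1\cap K_2$; because each interval $[\R^n,K]$ is carried isomorphically to $[M,K\cap M]$, the M\"obius values transfer verbatim: $\mu_{\cA^M}(M,K\cap M)=\mu(\R^n,K)$ whenever $\dim K>d$.

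With the transfer in hand, I split $\chi_{\cA^M}$ into the contribution of the positive-dimensional flats and that of the collapsed top, whose M\"obius value is pinned down by $\sum_N\mu_{\cA^M}(M,N)=0$, obtaining
\begin{equation*}
\chi_{\cA^M}(t)=\sum_{k>d}(-1)^{n-k}a_k\,t^{\,k-d}\;-\;\sum_{k>d}(-1)^{n-k}a_k .
\end{equation*}
Setting $t=-1$, multiplying by $(-1)^{n-d}$, and using $(-1)^{n-k}(-1)^{k-d}=(-1)^{n-d}$ collapses the first sum to $\sum_{k>d}a_k$ and turns the second into $\sum_{k>d}(-1)^{d+k}a_k$, so the region count equals $\sum_{k>d}\bigl(1-(-1)^{d+k}\bigr)a_k=2\sum_{k-d\ \mathrm{odd}}a_k=2(a_{d+1}+a_{d+3}+\cdots)$, which is the asserted formula.

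The step I expect to be the main obstacle is the lattice comparison: verifying rigorously that general position \eqref{1222} produces a rank-preserving isomorphism on the flats of dimension exceeding $d$ (so that M\"obius values transfer without change) and that every flat of dimension at most $d$ collapses to exactly the origin, together with the companion general-position fact that the open- and closed-chamber counts coincide. Once this geometric picture is secured, the passage through Zaslavsky's theorem and the final parity bookkeeping are routine.
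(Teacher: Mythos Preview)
The paper does not prove this theorem in the text; it simply refers to Theorem~3.3 and Lemma~3.5 of the companion paper~\cite{KVZ15}. Your approach---restrict to the induced arrangement $\cA^M$ in $M$, set up a bijection between its chambers and the chambers of $\cA$ meeting $M$, apply Zaslavsky's theorem~\eqref{1112} to $\cA^M$, and compute $\chi_{\cA^M}$ by transferring M\"obius values along the rank-preserving map $K\mapsto K\cap M$ on $\{K\in\cL(\cA):\dim K>d\}$---is exactly the argument carried out in that reference. Your verification of the lattice isomorphism (via \eqref{1222} applied to $K$ and to $K_1\cap K_2$), the determination of $\mu_{\cA^M}(M,\{0\})$ from $\sum_N\mu(M,N)=0$, and the final parity computation are all correct.

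The one step that is genuinely incomplete in your sketch is the equality of the open- and closed-chamber counts. You correctly observe that if $R\cap M=\varnothing$ then the convex cone $\overline R\cap M$ lies in $\bigcup_H(H\cap M)$ and hence in a single trace $H_0\cap M$, but you then assert that ``general position forbids'' this without justification; condition~\eqref{1222} does not by itself rule out $\overline R\cap M\subset H_0\cap M$. One way to close the gap: pick $x\in(\overline R\cap M)\setminus\{0\}$ and let $K\in\cL(\cA)$ be the smallest flat containing $x$, so that $x$ lies in the relative interior of the face $F=\overline R\cap K$; general position gives $\dim K>d$, and locally near $x$ the cone $\overline R$ coincides with a full-dimensional cone $T\supset K$ cut out only by the walls through $F$. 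One then shows $M\cap\mathrm{int}(T)\neq\varnothing$ by iterating the argument inside $H_0$ (or, equivalently, by induction on the codimension of the face $F$). This is precisely what Lemma~3.5 in~\cite{KVZ15} supplies, and you rightly flag it as the main obstacle.
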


Let us consider a special case: the \emph{reflection arrangements} in $\R^n$ of types $A_{n-1}$ and $B_n$.
These arrangements consist  of the hyperplanes
\begin{align}
\cA(A_{n-1})&\colon \quad  \{x_i = x_j\}, \quad 1\leq i < j \leq n, \label{eq:arr_A}\\
\cA(B_n)&\colon \quad \{x_i = x_j\}, \quad \{x_i = -x_j\}, \quad \{x_k = 0\}, \quad 1\leq i < j \leq n, \quad 1\leq k\leq n, \label{eq:arr_B}
\end{align}
where $(x_1,\dots,x_n)$ are the coordinates on $\R^n$.
It is easily seen that the regions in $\cR(\cA(A_{n-1}))$ and $\cR(\cA(B_n))$ are precisely the interiors of the Weyl chambers of type $A_{n-1}$ and $B_n$.

The characteristic polynomials of the reflection arrangements (see Section~5.1 and Corollary 2.2 in~\cite{rS07}) are given by
\begin{align}
&\chi_{\cA(A_{n-1})}(t) = t (t-1) \dots (t-(n-1))= \sum_{k=1}^{n} (-1)^{n-k} \stirling{n}{k} t^k,  \label{eq:chi_A} \\
\chi&_{\cA(B_{n})}(t) = (t-1)(t-3)\dots (t-(2n-1)) = \sum_{k=0}^{n} (-1)^{n-k} B(n,k)t^k, \label{eq:chi_B}
\end{align}
where $\stirling{n}{k}$ (the Stirling numbers of the first kind) and $B(n,k)$ (their $B$-analogues) have the following generating functions:
\begin{equation*}
t(t+1) \dots (t+n-1) = \sum_{k=1}^n \stirling{n}{k} t^k,
\quad
(t+1)(t+3)\dots (t+2n-1) = \sum_{k=0}^n B(n,k) t^k.
\end{equation*}

\subsection{Proof of Theorem~\ref{1435}}
We are now ready to complete the proof of Theorem~\ref{1435}. Applying Lemma~\ref{1436} to all $g\in G$ and taking the arithmetic mean, we obtain
\begin{align}\label{eq:reduction}
\P[0\in H]
=
\frac 1 {\# G} \sum_{g\in G} \P[(\Ker A) \cap L \cap (g\bar C) \neq \{0\}]
=
\frac {\E N} {\# G} ,
\end{align}
where the random variable
\begin{equation} \label{eq:N=}
N := \sum_{g\in G} \ind_{\{(\Ker A) \cap L\cap  (g\bar C) \neq \{0\}\}}
\end{equation}
counts the number of Weyl chambers of the form $g\bar C$, $g\in G$, intersected by the random linear subspace $(\Ker A)\cap L$ in a nontrivial way.

Given arbitrary arrangements $\cA_1,\dots,\cA_M$ in $\R^{q_1},\dots,\R^{q_M}$, define their \emph{direct product} as the following arrangement in $\R^q \equiv \R^{q_1+\dots+q_M}$:
\begin{multline*}
  \cA_1\times\dots\times\cA_M= \\
  \{ H \times \R^{q-q_1} \}_{H \in \cA_1}\;\bigcup\; \{ \R^{q_1} \times H \times \R^{q-q_1-q_2}\}_{H \in \cA_2} \;\bigcup\; \dots
  \;\bigcup\; \{ \R^{q-q_M} \times H\}_{H \in \cA_M}.
\end{multline*}
Consider the reflection arrangement $\cA$ of type $B_{n_1} \times \dots \times B_{n_s}\times A_{m_1-1} \times \dots \times A_{m_r-1}$, that is
$$
\cA=\cA(B_{n_1}) \times \dots \times \cA(B_{n_s})\times \cA(A_{m_1-1}) \times \dots \times \cA(A_{m_r-1}).
$$
The characteristic polynomial of a direct product of arrangements is the product of the individual characteristic polynomials (Lemma~2.50 on p.~43 in~\cite{OT92}), hence
\begin{align*}
(-1)^{n} \chi_{\cA} (-t)
&=\prod_{i=1}^s ((t+1)(t+3)\dots (t+2n_i-1)) \times\prod_{j=1}^r (t(t+1)\dots (t+m_j-1)) \\
&=
\sum_{k=r}^{n+r} P(k-r) t^k,
\end{align*}
where we used the notation $P(k)$ from~\eqref{eq:def_p_k}. Now observe that $N$ is the number of regions in $\cR(\cA)$ intersected by $(\Ker A) \cap L$.
\begin{lemma}\label{lem:gen_pos}
If the general position assumption imposed in Theorem~\ref{1435} holds, then with probability $1$, the random linear subspace $(\Ker A)\cap L$ has codimension $d+r$ in $\R^n$ and is in general position w.r.t.\ $\cA$.
\end{lemma}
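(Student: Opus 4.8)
The plan is to prove the two assertions of the lemma in turn: that $(\Ker A)\cap L$ has codimension $d+r$ in $\R^n$, which is a rank computation, and that it is in general position with respect to $\cA$, which I would reduce flat by flat to the general position hypothesis of Theorem~\ref{1435}. (Throughout I use that this hypothesis is non-vacuous only when $d\le n-r$, so this inequality may be assumed.)

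\emph{Codimension.} First note that the $r$-dimensional subspace $W\subset\R^n$ spanned by the vectors equal to $(1,\dots,1)$ on a single bridge block and to $0$ elsewhere is contained in $\Ker A$ --- this is precisely the bridge condition $\eta_1^{(j)}+\dots+\eta_{m_j}^{(j)}=0$ --- while $W\cap L=\{0\}$ because a nonzero element of $W$ has nonzero coordinate sum on some bridge block. A unipotent triangular change of basis inside each block replaces the columns of $A$ by the partial sums $S_l^{(i)}$ and $R_l^{(j)}$ (the null vectors $R_{m_j}^{(j)}=0$ being discarded), so $\mathrm{Im}\,A=\Span\{S_l^{(i)},R_l^{(j)}\}$, which has dimension $d$ by the general position hypothesis; hence $\dim\Ker A=n-d$. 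The same computation, together with the observation that $\eta_1^{(j)}$ lies in the span of the differences $\eta_l^{(j)}-\eta_{l'}^{(j)}$ (again by the bridge condition), shows that $A|_L\colon L\to\R^d$ is still onto, whence $\dim((\Ker A)\cap L)=\dim\Ker(A|_L)=(n-r)-d$.

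\emph{Reduction for general position.} Since the lattice of a product arrangement is the product of the lattices, every $K\in\cL(\cA)$ splits as $K=K_B^{(1)}\times\dots\times K_B^{(s)}\times K_A^{(1)}\times\dots\times K_A^{(r)}$ with $K_B^{(i)}\in\cL(\cA(B_{n_i}))$ and $K_A^{(j)}\in\cL(\cA(A_{m_j-1}))$. Each $K_A^{(j)}$ contains the all-ones vector, which does not lie in $L^{(j)}$, so $L^{(j)}\cap K_A^{(j)}$ is a proper hyperplane section of $K_A^{(j)}$, giving $\dim(L\cap K)=\dim K-r$ for every $K$. It therefore suffices to show that almost surely, for every $K\in\cL(\cA)$,
$$
\rank\big(A|_{L\cap K}\big)=\min\big(d,\dim(L\cap K)\big),
$$
since this yields $\dim((\Ker A)\cap L\cap K)=\dim(L\cap K)-\min(d,\dim(L\cap K))=\max(0,\dim K-(d+r))$, which is exactly the general position condition~\eqref{1222} for a subspace of codimension $d+r$.

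\emph{The rank identity.} Fix $K$ and pick a basis of $L\cap K$ adapted to the blocks (a basis of $K_B^{(i)}$ in each walk block, a basis of $L^{(j)}\cap K_A^{(j)}$ in each bridge block). Applying $A$, and using $\eta_1^{(j)}+\dots+\eta_{m_j}^{(j)}=0$ to drop one part per bridge block, one finds that $A(L\cap K)$ is spanned by a family of exactly $\dim(L\cap K)$ vectors: signed sums $\sum_{l\in P}\pm\xi_l^{(i)}$ over pairwise disjoint index sets $P$ in each walk block, and sums $\sum_{l\in P}\eta_l^{(j)}$ over pairwise disjoint parts $P$ in each bridge block. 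Now invoke the invariance~\eqref{eq:invar_product}: choosing inside each walk block a signed permutation that converts these signed sums into consecutive interval sums $\xi_{a+1}^{(i)}+\dots+\xi_b^{(i)}=S_b^{(i)}-S_a^{(i)}$, and inside each bridge block a permutation doing the same for the $\eta$-sums, we see that this spanning family has the same joint distribution as a family of consecutive differences of partial sums --- one increasing chain per block --- involving $\dim(L\cap K)$ pairwise distinct members of the list $S_1^{(i)},\dots,S_{n_i}^{(i)},R_1^{(j)},\dots,R_{m_j-1}^{(j)}$. Because consecutive differences span the same subspace as the partial sums themselves, the dimension of the span of the family is, in distribution, the dimension of the span of those $\dim(L\cap K)$ list members, which by the general position hypothesis of Theorem~\ref{1435} is almost surely $\min(d,\dim(L\cap K))$. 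As $\rank(A|_{L\cap K})=\dim A(L\cap K)$ is a deterministic function of the family, the rank identity holds almost surely for the given $K$, and a union bound over the finitely many flats in $\cL(\cA)$ completes the proof.

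The step I expect to be the real work is the last one: one must check that for every shape of a type-$(B_{n_1}\times\dots\times A_{m_r-1})$ flat, each adapted basis vector is mapped by $A$ to a signed sum of increments supported in a single block, and that after acting by a suitable element of $G$ these become consecutive differences of \emph{distinct} partial sums from the list, so that the hypothesis of Theorem~\ref{1435} really applies. What makes this go through is that a sign change in a $B$-block turns any signed sum into an ordinary sum, and that the bridge relation removes exactly one part per bridge block, which is precisely the codimension $r$ lost upon intersecting with $L$.
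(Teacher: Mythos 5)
Your proof is correct and follows essentially the same route as the paper's: both use the product structure of $\cL(\cA)$, the explicit description of type-$A$ and type-$B$ flats, the $G$-invariance of the distribution to normalize the groups into consecutive intervals, and the general position of the partial sums $S_l^{(i)}, R_l^{(j)}$ to get full rank. The only (cosmetic) differences are that you intersect with $L$ before applying $A$ rather than first showing $L^\perp\subset K\cap\Ker A$, and you handle the case $\dim K\le d+r$ directly via the $\min$ instead of enlarging $K$ to a flat of dimension exactly $d+r$.
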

Postponing the proof of the lemma for a moment, we apply Theorem~\ref{1229} to obtain that
$$
N = 2 (P(d+1) + P(d+3) +\dots)
\quad \text{a.s.}
$$
Combining this equation with \eqref{eq:reduction} completes the proof of Theorem~\ref{1435}. \hfill $\Box$

\begin{proof}[Proof of Lemma~\ref{lem:gen_pos}]
In the case of just one random walk or random bridge, we proved the lemma in~\cite[Section 6.2]{KVZ15}. The proof in the direct product case is similar and we sketch only the main ideas. Consider a linear subspace $K$ from the lattice generated by the arrangement $\cA$. That is, $K$ can be represented as an intersection of some hyperplanes from $\cA$ and, consequently,
\begin{equation}\label{eq:K_product}
K= K_1\times \dots \times K_{s} \times K_1'\times \dots \times K_r'
\end{equation}
for some $K_i\in \cL(\cA(B_{n_i}))$, $1\leq i\leq s$, and $K_j'\in \cL(\cA(A_{m_j-1}))$, $1\leq j\leq r$.
Our aim is to prove that
\begin{equation}\label{eq:gen_pos_need0}
\dim (K\cap L \cap \Ker A) \stackrel{\text{a.s.}}{=}
\begin{cases}
\dim K - d - r, & \text{ if } \dim K \geq d+r,\\
0, & \text{ if } \dim K \leq d+r.
\end{cases}
\end{equation}
Note in passing that taking $K=\R^n$ would yield $\codim (L \cap \Ker A)= d+r$ a.s.

In fact, it suffices to prove that
\begin{equation}\label{eq:gen_pos_need}
\dim (K\cap \Ker A) \stackrel{\text{a.s.}}{=}
\begin{cases}
\dim K - d, & \text{ if } \dim K \geq d+r,\\
r, & \text{ if } \dim K \leq d+r.
\end{cases}
\end{equation}
To see that~\eqref{eq:gen_pos_need} implies~\eqref{eq:gen_pos_need0}, let us show that $K\cap \Ker A$ contains the $r$-dimensional linear subspace $L^{\bot}$ with probability $1$. Indeed, for every $1\leq j\leq r$ we have
$$
A(f_1^{(j)} + \dots + f_{m_j}^{(j)}) = \eta_1^{(j)} + \dots + \eta_{m_j}^{(j)} = 0 \quad \text{a.s.}
$$
by definition of $A$ and the bridge property, whence $L^{\bot} \subset \Ker A$. To see that $L^{\bot} \subset K$, recall that by definition of the arrangement of type $A_{m_j-1}$, see~\eqref{eq:arr_A}, the vector $f_1^{(j)} + \dots + f_{m_j}^{(j)}$ belongs to all hyperplanes from $\cA(A_{m_j-1})$ and hence, to all linear subspaces from $\cL(\cA(A_{m_j-1}))$.

Next we are going to write down an explicit system of equations defining $K$. Recall that $(x_1^{(i)}, \dots, x_{n_i}^{(i)})$ are coordinates on $\R^{n_i}$, while $(y_1^{(j)}, \dots, y_{m_j}^{(j)})$ are coordinates on $\R^{m_j}$. Let us first look at the lattice generated by the hyperplane arrangement $\cA(A_{m_j-1})$; see~\eqref{eq:arr_A} for its definition.  Any linear subspace belonging to this lattice is given by a system of equations of the following type. Decompose the variables $y_{1}^{(j)},\dots, y_{m_j}^{(j)}$ into some number, say $q(j)$, of non-empty groups, and then require the variables inside the same group to be equal to each other. Linear subspaces belonging to  the lattice generated by the hyperplane arrangement $\cA(B_{n_i-1})$, see~\eqref{eq:arr_B} for its definition, can be described as follows. Decompose the variables $x_1^{(i)}, \dots, x_{n_i}^{(i)}$ into some number, say $p(i)+1$ of groups (all groups being non-empty except possibly the last one). Require the variables in the last group to be $0$. For each group except the last one,  multiply each variable in the group by either $+1$ or $-1$, and require the resulting signed variables to be equal to each other.

Taking all the equations described above together, we obtain a system of equations defining $K$.  However, since the distribution of the linear subspace $(\Ker A)\cap L$ is invariant w.r.t.\ the action of $G$, after transforming everything by a suitable $g\in G$, we can assume without loss of generality that $K$ is given by the following simplified system of equations.
For every $1\leq i \leq s$, we have the equations
\begin{align*}
&\gamma_1(i):= x^{(i)}_1 = \dots = x^{(i)}_{u_{1}(i)},\\
&\gamma_2(i) := x^{(i)}_{u_{1}(i) +1} = \dots = x^{(i)}_{u_{2}(i)},\\
&\dots,\\
&\gamma_{p(i)}(i):= x^{(i)}_{u_{p(i)-1}(i)+1} = \dots = x^{(i)}_{u_{p(i)}(i)},\\
&x^{(i)}_{u_{p(i)}(i)+1} = \dots = x^{(i)}_{n_i} = 0,
\end{align*}
with some $0=:u_{0}(i) < u_1(i) < \dots < u_{p(i)}(i) \leq n_i$, and for every $1\leq j\leq r$, we have the equations
\begin{align*}
&\delta_1(j):=  y^{(j)}_1 = \dots = y^{(j)}_{v_{1}(j)},\\
&\delta_2(j) := y^{(j)}_{v_{1}(j) +1} = \dots = y^{(j)}_{v_{2}(j)},\\
&\dots,\\
&\delta_{q(j)}(j):= y^{(j)}_{v_{q(j)-1}(j)+1} = \dots = y^{(j)}_{m_j},
\end{align*}
with some $0=:v_{0}(j) < v_1(j) < \dots < v_{q(j)}(j) := m_j$. We use the variables $\gamma_1(i), \dots, \gamma_{p(i)}(i)$ ($1\leq i \leq s$) and $\delta_1(j), \dots, \delta_{q(j)}(j)$ ($1\leq j \leq r$) as coordinates on $K$. Note that
\begin{equation}\label{eq:dim_K}
\dim K = \sum_{i=1}^s p(i) + \sum_{j=1}^r q(j).
\end{equation}
The linear subspace $\Ker A$ is the given by the equation
\begin{equation}\label{eq:Ker_A}
\sum_{i=1}^s \sum_{l=1}^{n_i} x_l^{(i)} \xi_l^{(i)}   + \sum_{j=1}^r \sum_{l=1}^{m_j} y_l^{(j)} \eta_l^{(j)} = 0.
\end{equation}
Inside $K$, the linear subspace $K\cap \Ker A$ is given by the equation
\begin{multline}\label{eq:gen_pos_main}
\sum_{i=1}^s
\left(\gamma_1(i) S^{(i)}_{u_1(i)} +   \gamma_2(i) (S^{(i)}_{u_2(i)} - S^{(i)}_{u_1(i)}) +\dots + \gamma_{p(i)}(i) (S^{(i)}_{u_{p(i)}(i)} - S^{(i)}_{u_{p(i)-1}(i)}) \right)
\\+
\sum_{j=1}^r \left( \delta_1(j) R^{(j)}_{v_1(j)} +   \delta_2(j) (R^{(j)}_{v_2(j)} - R^{(j)}_{v_1(j)}) +\dots + \delta_{q(j)}(j) (0 - R^{(j)}_{v_{q(j)-1}(j)})\right)
\stackrel{\text{a.s.}}{=}0.
\end{multline}
Recall that the random walks and bridges take values in $\R^d$, so that, effectively, \eqref{eq:Ker_A} and~\eqref{eq:gen_pos_main} are systems of $d$ equations each.

Let $\dim K \geq d+r$. Then, by the general position assumption from Theorem~\ref{1435}, the collection of random vectors
$$
S^{(i)}_{u_1(i)}, S^{(i)}_{u_2(i)},\dots, S^{(i)}_{u_{p(i)}(i)}, \;\;\; (1\leq i \leq s),\;\;
R^{(j)}_{v_1(j)}, R^{(j)}_{v_2(j)}, \dots, R^{(j)}_{v_{q(j)-1}(j)}
\;\; (1\leq j\leq r)
$$
spans linearly the whole $\R^d$ with probability $1$ since the total number of the vectors is at least $d$; see~\eqref{eq:dim_K}. It follows that the system of $d$ equations in~\eqref{eq:gen_pos_main} has full rank a.s., hence the dimension of the set of its solutions is $\dim K - d$ a.s., thus proving the first case of~\eqref{eq:gen_pos_need}.  Let now $\dim K \leq d+r$. Then, we can find a linear subspace $K'\supset K$ such that $\dim K'= d+r$ and $K'\in \cL(\cA)$. Applying the above to $K'$, we obtain $\dim (K'\cap L \cap \Ker A) = 0$ a.s., hence $\dim (K\cap L \cap \Ker A) = 0$ a.s., thus proving the second case in~\eqref{eq:gen_pos_need}.
\end{proof}

\section{Proof of Theorems~\ref{1249} and~\ref{1249bridge}}
\begin{proof}[Proof of Theorem~\ref{1249}]
Given $k+1$ vectors $x_1,\dots,x_{k+1}\in\R^d$  denote by $\aff(x_1,\dots,x_{k+1}) = x_1 + \lin(0,x_2 - x_1, \dots,x_{k+1} - x_1)$ their affine hull and by
$$
\aff^\perp(x_1,\dots,x_{k+1}) = (\aff(x_1,\dots,x_{k+1})- x_1)^\perp
$$
the orthogonal complement of $\aff(x_1,\dots,x_{k+1})$, which is a linear subspace.

Let $\cdot|M$ denote the orthogonal projection on $M:=\aff^\perp(S_{i_1},\dots,S_{i_{k+1}})$. Note that $\dim M = d - k$ a.s.\ because
$$
(S_{i_2} -S_{i_1},\dots, S_{i_{k+1}} - S_{i_1}) \eqdistr (S_{i_2-i_1},\dots, S_{i_{k+1}-i_1})
$$
by $(\pm\text{Ex})$ (in fact, condition $(\text{Ex})$ suffices) and the random vectors on the right-hand side are a.s.\ linearly independent by $(\text{GP})$. Projecting the path $S_0,\dots,S_n$ on $M$ gives a random walk terminating at
$P_0:= S_{i_1}|M = \dots= S_{i_{k+1}}|M$ (viewed as the origin of $M$), $k$ random bridges that  start and terminate at $P_0$, and a random walk starting at $P_0$. The first walk shall be time-reversed and sign-changed to start from~$P_0$. The increments of the random walks are given by
\begin{align*}
&\xi_1^{(1)}=-\xi_{i_1}|M, \;\;  \xi_2^{(1)}=-\xi_{i_1-1}|M, \;\;  \dots, \;\; \xi_{i_1}^{(1)}=-\xi_1|M,\\
&\xi_1^{(2)}=\xi_{i_{k+1}+1}|M,\;\; \xi_2^{(2)}=\xi_{i_{k+1}+2}|M, \;\; \dots, \;\; \xi_{n-i_{k+1}}^{(2)}=\xi_n|M,
\end{align*}
while the increments of the random bridges are given by
$$
\eta_1^{(j)}=\xi_{i_j+1}|M, \;\; \dots, \;\; \eta_{i_{j+1}-i_j}^{(j)}=\xi_{i_{j+1}}|M,
\quad j=1,\dots k.
$$

We shall apply Theorem~\ref{1435} to these $s=2$ random walks and $r=k$ random bridges in $M$ with $M \cong \R^{d-k}$ a.s.
It is easy to see that  their increments listed above satisfy the invariance assumption \eqref{eq:invar_product} of Theorem~\ref{1435}, namely,  permuting the increments within the walks/bridges and changing the signs of the increments in both random walks does not change the joint distribution of the increments.  In fact, such transformations of the unprojected increments of the original random walk $S_1, \ldots, S_n$ do not change the joint distribution of these increments and, importantly, do not change~$M$.

Denote by $H_0$ the joint convex hull of the above random walks and bridges: $H_0 := C_n|M$. The key observation is as follows:
\begin{equation} \label{eq: key}
\conv(S_{i_1},\dots,S_{i_{k+1}}) \text{ is a }k\text{-face of }C_n  \text{ if and only if } P_0 \text{ is a vertex of }H_0
\end{equation}
on the set of full probability described by the general position assumption $(\text{GP})$.
This is evident since, by definition, the faces of a convex polytope are obtained by intersecting the polytope with its supporting hyperplanes.

Postponing the verification of the general position assumption for a moment, we apply Theorem~\ref{1435}, see also Remark~\ref{rem:zero included} and~\eqref{eq:non_absorption},  to obtain that
$$
\P[P_0 \in \mathcal{F}_0(H_0)]=
\frac{2(P_{i_1,\dots,i_{k+1}}^{(n)}(d-k-1) + P_{i_1,\dots,i_{k+1}}^{(n)}(d-k-3)+\dots)}{2^{i_1+n-i_{k+1}} i_1!(i_2-i_1)!\dots  (i_{k+1}-i_k)! (n-i_{k+1})!}
$$
with the generating function for the $P_{i_1,\dots,i_{k+1}}^{(n)}(j)$'s defined in Theorem~\ref{1249}.

To complete the proof of  Theorem~\ref{1249} we need to verify the general position assumption of Theorem~\ref{1435}. Let $T_1,\dots,T_{d-k}$ be any $d-k$ random vectors from the list
\begin{align*}
&S_{i_1-1} - S_{i_1}, S_{i_1-2} - S_{i_1}, \dots, S_{1} - S_{i_1}    \quad \text{(first walk, unprojected and time-reversed)}, \\
&S_{i_1+1} - S_{i_1}, S_{i_1+2} - S_{i_1}, \dots,S_{i_2-1} - S_{i_1}    \quad \text{(first bridge, unprojected)}, \\
&\dots,\\
&S_{i_k+1} - S_{i_k}, S_{i_k+2} - S_{i_k}, \dots,S_{i_{k+1}-1} - S_{i_k}    \quad \text{($k$-th bridge, unprojected)}, \\
&S_{i_{k+1}+1} - S_{i_{k+1}}, S_{i_{k+1}+2} - S_{i_{k+1}}, \dots,S_{n} - S_{i_{k+1}} \quad \text{(second walk, unprojected)}.
\end{align*}
We need to show that  $T_1|M,\dots, T_{d-k}|M$ are linearly independent with probability $1$. Since the orthogonal complement of $h$ is spanned by $S_{i_2}-S_{i_1},\dots, S_{i_{k+1}} - S_{i_k}$, it suffices to check that the random vectors
\begin{equation}\label{eq:list_GP_face_probab}
T_1,\dots,T_{d-k}, S_{i_2}-S_{i_1},\dots, S_{i_{k+1}} - S_{i_k}
\end{equation}
are linearly independent with probability $1$. But it is easy to see that their linear hull coincides with the linear hull of
$$
S_{j_1}-S_{j_0}, S_{j_2}-S_{j_1}, \dots, S_{j_d}-S_{j_{d-1}}
$$
for some collection of indices $0\leq j_0< j_1 < \dots < j_d \leq n$ containing the set $\{i_1, \ldots, i_{k+1}\}$. By assumptions $(\pm\text{Ex})$ and $(\text{GP})$, this linear hull has maximal possible dimension $d$ a.s. This proves the a.s. linear independence of the random vectors in~\eqref{eq:list_GP_face_probab}.
\end{proof}

\begin{proof}[Proof of Theorem~\ref{1249bridge}]
The main idea is the same as in the previous proof. Consider the linear subspace $M:=\aff^\perp(S_{i_1},\dots,S_{i_{k+1}})$ and note that $\dim M = d-k$ a.s.\ by the same argument as in the previous proof. Projecting the closed path $S_0,\dots,S_n$ on $M$, we obtain $k+1$ random bridges in $M$ (with $M \cong \R^{d-k}$ a.s.) starting and terminating at $P_0:=S_{i_1}|M = \dots= S_{i_{k+1}}|M$. The random bridge number $j+1\in \{2,\dots,k+1\}$ is the projection of the path $S_{i_j}, S_{i_j+1},\dots, S_{i_{j+1}}$ and has increments
$$
\eta_1^{(j+1)}=\xi_{i_j+1}|M, \;\;  \eta_2^{(j+1)}=\xi_{i_j+2}|M, \;\;  \dots, \;\; \eta_{i_{j+1} - i_j}^{(j+1)}=\xi_{i_{j+1}}|M, \quad j=1,\dots,k,
$$
while the first random bridge is the projection of the path $S_{i_{k+1}},\dots, S_{n-1}, 0, S_1,\dots, S_{i_1}$ and its increments are
$$
\eta_1^{(1)}=\xi_{i_{k+1}+1} | M, \;\; \dots, \;\; \eta_{n-i_{k+1}}^{(1)}=\xi_{n}|M,\;\;
\eta_{n-i_{k+1} + 1}^{(1)}=\xi_{1} | M, \;\; \dots, \;\; \eta_{n-i_{k+1} + i_1}^{(1)}=\xi_{i_1}|M.
$$

Again, we observe that the invariance condition~\eqref{eq:invar_product} of Theorem~\ref{1435} is satisfied for these $r=k+1$ random bridges (and $s=0$ random walks) because the joint distribution of the increments is invariant with respect to arbitrary permutations of the increments within the bridges. The general position assumption of Theorem~\ref{1435} will be verified below. Observe that with probability one, $\conv(S_{i_1},\dots,S_{i_{k+1}})$ is a $k$-face of $C_n$ if and only if $P_0$ is a vertex of the joint convex hull $H_0:=C_n|M$ of the above bridges; see~\eqref{eq: key}. 
Hence, Theorem~\ref{1435} (see also Remark~\ref{rem:zero included} and~\eqref{eq:non_absorption}), yields
$$
\P[P_0 \in \mathcal{F}_0(H_0)]=
\frac{
2(Q_{i_1,\dots,i_{k+1}}^{(n)}(d-k-1) + Q_{i_1,\dots,i_{k+1}}^{(n)}(d-k-3)+\dots)
}{(i_2-i_1)!\dots  (i_{k+1}-i_k)! (n-i_{k+1}+i_1)!}
$$
with the generating function for the $Q_{i_1,\dots,i_{k+1}}^{(n)}(j)$'s defined in Theorem~\ref{1249bridge}.

To verify the general position assumption of Theorem~\ref{1435},  let $T_1,\dots,T_{d-k}$ be any $d-k$ vectors from the list
\begin{align*}
&S_{i_{k+1}+1} - S_{i_{k+1}}, S_{i_{k+1}+2} - S_{i_{k+1}}, \dots, S_{n-1}- S_{i_{k+1}},
 0 - S_{i_{k+1}}, S_1-S_{i_{k+1}},\dots,S_{i_1-1}-S_{i_{k+1}},  \\
&S_{i_1+1} - S_{i_1}, S_{i_1+2} - S_{i_1}, \dots,S_{i_2-1} - S_{i_1}, \\
&\dots,\\
&S_{i_k+1} - S_{i_k}, S_{i_k+2} - S_{i_k}, \dots,S_{i_{k+1}-1} - S_{i_k}.
\end{align*}
Our aim is to prove that $T_1|M,\dots, T_{d-k}|M$ are linearly independent with probability $1$. The orthogonal complement of $h$ is spanned by $S_{i_2}-S_{i_1},\dots, S_{i_{k+1}} - S_{i_k}$, hence our task reduces to showing that the vectors
\begin{equation}\label{eq:list_GP_face_probab_bridge}
T_1,\dots,T_{d-k}, S_{i_2}-S_{i_1},\dots, S_{i_{k+1}} - S_{i_k}
\end{equation}
are linearly independent with probability $1$. But their linear hull coincides with the linear hull of
$$
S_{j_1}-S_{j_0}, S_{j_2}-S_{j_1}, \dots, S_{j_d}-S_{j_{d-1}}
$$
for a suitable collection of indices $i_1 =: j_0< j_1 <\dots < j_d < n+j_0$ containing the set $\{i_1, \ldots, i_{k+1}\}$ (with the convention $S_{n+j} = S_j$ for $j \ge 0$). By assumptions $(\text{Ex})$ and $(\text{GP}')$, this linear hull has maximal possible dimension $d$ a.s. This proves the a.s. linear independence of the random vectors in~\eqref{eq:list_GP_face_probab_bridge}.
\end{proof}

\section{Expected number of faces of a random walk} \label{sec:proof_numb_faces}
\subsection{Method of proof}
In the following we shall sketch the main steps in the proof of Theorem~\ref{theo:expected_walk}.
As a direct corollary of Theorem~\ref{1249}, we obtain a formula for the expected number of $k$-dimensional faces of $C_n$ under assumptions $(\pm\text{Ex})$ and $(\text{GP})$:
\begin{equation}\label{eq:expect_faces_walk_big_sum}
\E [f_k(C_n)]
=
2\sum_{0\leq i_1<\dots<i_{k+1}\leq n}\frac{P_{i_1,\dots,i_{k+1}}^{(n)}(d-k-1) + P_{i_1,\dots,i_{k+1}}(d-k-3)+\dots}{2^{i_1+n-i_{k+1}} i_1!(i_2-i_1)!\dots  (i_{k+1}-i_k)! (n-i_{k+1})!},
\end{equation}
for all $0\leq k\leq d-1$.  Similarly, it follows from Theorem~\ref{1249bridge} that for random bridges satisfying $(\text{Br})$, $(\text{Ex})$, $(\text{GP}')$, we have
\begin{equation}\label{eq:expect_faces_bridge_big_sum}
\E\, [f_k(C_n)]
=
2 \sum_{0\leq i_1<\dots<i_{k+1} < n}\frac{Q_{i_1,\dots,i_{k+1}}^{(n)}(d-k-1) + Q_{i_1,\dots,i_{k+1}}^{(n)}(d-k-3)+\dots}{(i_2-i_1)!\dots  (i_{k+1}-i_k)! (n-i_{k+1}+i_1)!}.
\end{equation}
In particular, the expected number of faces is distribution-free both for walks under $(\pm\text{Ex})$, $(\text{GP})$, and for bridges under  $(\text{Br})$, $(\text{Ex})$, $(\text{GP}')$.

In Section~\ref{subsec:simplification} we shall evaluate the sum on the right-hand side of~\eqref{eq:expect_faces_walk_big_sum}, thus proving Theorem~\ref{theo:expected_walk} for \emph{symmetric} random walks satisfying $(\pm\text{Ex})$ and $(\text{GP})$. In order to remove the unnecessary symmetry assumption, we shall prove in Section~\ref{subsec:walks_equals_bridge} that the expected number of $k$-faces of any random walk of length $n$ satisfying assumptions $(\text{Ex})$, $(\text{GP})$ is the same as for any random bridge of length $n+1$ satisfying assumptions $(\text{Br})$, $(\text{Ex})$, $(\text{GP}')$ with $n$ replaced by $n+1$. In particular, the expected number of $k$-faces of a random walk is distribution-free provided $(\text{Ex})$ and $(\text{GP})$ hold. This will show that assumption $(\pm\text{Ex})$ is indeed unnecessary and can be relaxed to $(\text{Ex})$.


\subsection{Proof of Theorem~\ref{theo:expected_walk} in the symmetric case}\label{subsec:simplification}
Let us prove that under assumptions $(\pm\text{Ex})$ and $(\text{GP})$,
\begin{equation}\label{eq:exp_walk_symmetric}
\E [f_k(C_n)]= \frac{2\cdot k!}{n!} \sum_{l=0}^{\infty}\stirling{n+1}{d-2l}  \stirlingsec{d-2l}{k+1}.
\end{equation}

Recall from~\eqref{eq:rising_factorial} that $t^{(j)} = t(t+1)\dots (t+j-1)$ denotes the rising factorial. Let $[t^N] f(t) = \frac 1 {N!} f^{(N)}(0)$ be the coefficient of $t^N$ in the Taylor expansion of a function $f$ around $0$. For $m\in\N_0 = \N \cup \{0\}$ define
\begin{multline*}
R_{n,k}(m) \\
:= [t^m] \sum_{j_0,\dots,j_{k+1}}
\left(\frac{(t+1)(t+3)\dots (t+2j_0-1)}{2^{j_0} j_0!}
\frac{(t+1)(t+3)\dots (t+2j_{k+1}-1)}{2^{j_{k+1}} j_{k+1}!}
\frac{t^{(j_1)}}{t j_1!}\dots \frac{t^{(j_k)}}{t j_k!}\right),
\end{multline*}
where the sum  is taken over all $j_0, j_{k+1}\in\N_0$ and $j_1,\dots,j_k\in\N$ such that $j_0+\dots+j_{k+1} = n$.
With this notation,  Theorem~\ref{1249} (see also~\eqref{eq:expect_faces_walk_big_sum}) implies that
$$
\E [f_k(C_n)]
=
2\sum_{l=0}^{\infty} R_{n,k}(d-k-2l-1).
$$
Thus, to prove~\eqref{eq:exp_walk_symmetric}, it suffices to  show that
\begin{equation}\label{eq:R_n_m}
R_{n,k}(m) =\frac{k!}{n!} \stirlingsec{m+k+1}{k+1} \stirling{n+1}{m+k+1}.
\end{equation}

Expanding the product yields
$$
R_{n,k}(m) = [t^m][x^n] \left(\left(\sum_{j=0}^{\infty}\frac{(t+1)(t+3)\dots (t+2j-1)}{2^{j} j!} x^j\right)^2 \left(\sum_{j=1}^\infty\frac{t^{(j)}}{t j!}x^j \right)^k \right).
$$
Using the binomial series (for $t>0$)
$$
\sum_{j=1}^\infty\frac{t^{(j)}}{t j!}x^j = \frac{(1-x)^{-t} - 1}{t},
\quad
\sum_{j=0}^{\infty}\frac{(t+1)(t+3)\dots (t+2j-1)}{2^{j} j!} x^j = (1-x)^{-\frac 12 (t+1)},
$$
we obtain
$$
R_{n,k}(m)= [t^m][x^n] \left((1-x)^{-t-1} \left(\frac{(1-x)^{-t} - 1}{t}\right)^k \right)
=[x^n][t^m] \left(\eee^{-a(t+1)} \left(\frac{\eee^{-at} - 1}{t}\right)^k \right),
$$
where we introduced the notation $a=a(x) = \log (1-x)$.

Consider the term
$$
\eee^{-a(t+1)} \left(\frac{\eee^{-at} - 1}{t}\right)^k
=
t \eee^{-a}  \left(\frac{\eee^{-at} - 1}{t}\right)^{k+1}
+ \eee^{-a}  \left(\frac{\eee^{-at} - 1}{t}\right)^{k}.
$$
As a consequence of the second equality in~\eqref{eq:stirling_def}, we have
$$
\left( \frac {\eee^{-at}-1}{t}\right)^{k} = \sum_{m=0}^{\infty} (-a)^{m+k} \frac{k!}{(m+k)!} \stirlingsec{m+k}{k} t^m.
$$
Using this formula twice, we obtain
\begin{align*}
[t^m] \eee^{-a(t+1)} \left(\frac{\eee^{-at} - 1}{t}\right)^k
&=
(-a)^{m+k} \frac{1}{(m+k)!} \eee^{-a}  \left(\stirlingsec{m+k}{k+1}(k+1)! +\stirlingsec{m+k}{k}k!\right)\\
&=
(-a)^{m+k} \frac{k!}{(m+k)!} \eee^{-a} \stirlingsec{m+k+1}{k+1},
\end{align*}
where the last line follows from the relation
$$
\stirlingsec{m+k}{k+1}(k+1) +\stirlingsec{m+k}{k} = \stirlingsec{m+k+1}{k+1}.
$$

Now recall that $a= \log (1-x)$ and use the first generating function in~\eqref{eq:stirling_def} to get
\begin{align*}
[x^n]((-a)^{m+k}\eee^{-a})
&=
[x^n] \frac{(-\log (1-x))^{m+k}}{(1-x)}\\
&=
\frac 1{m+k+1} [x^n] \frac{\dd}{\dd x} (-\log (1-x))^{m+k+1}\\
&=
\frac{n+1}{m+k+1} [x^{n+1}](-\log (1-x))^{m+k+1}\\
&=
\frac{(m+k)!}{n!} \stirling{n+1}{m+k+1}.
\end{align*}
Taking everything together, we obtain~\eqref{eq:R_n_m}, thus completing the proof.

\subsection{Relation between random walks and random bridges}\label{subsec:walks_equals_bridge}
The next result completes the proof of Theorem~\ref{theo:expected_walk}.
\begin{theorem}\label{1455}
Let $(S_i)_{i=0}^n$ be a random walk in $\R^d$ whose increments $(\xi_1,\dots,\xi_n)$ satisfy conditions $(\text{Ex})$ and $(\text{GP})$. Further, let $(S_i')_{i=0}^{n+1}$ be a random bridge of length $n+1$  satisfying conditions $(\text{Br})$, $(\text{Ex})$ and $(\text{GP}')$ with $n$ replaced by $n+1$ (in particular, $S_0'=S_{n+1}'=0$). Write $C_n=\conv(S_0,\dots,S_n)$ and $C_{n+1}'= \conv(S_0',\dots,S_{n+1}')$ for the corresponding convex hulls. Then, for all $0\leq k \leq d-1$,
\begin{equation}\label{1613}
\E [f_k(C_n)] =
\E [f_k(C_{n+1}')].
\end{equation}
\end{theorem}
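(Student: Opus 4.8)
The plan is to build the required random bridge of length $n+1$ out of the given random walk by a cyclic symmetrization, and then to invoke the distribution-freeness of the expected face number of bridges. Concretely, set $\xi_{n+1}:=-(\xi_1+\dots+\xi_n)=-S_n$, so that $\xi_1+\dots+\xi_{n+1}=0$. Let $J$ be uniform on $\{0,1,\dots,n\}$ and independent of $(\xi_1,\dots,\xi_n)$, let $\hat\xi_1,\dots,\hat\xi_{n+1}$ be the cyclic rotation of $\xi_1,\dots,\xi_{n+1}$ by $J$ positions, and put $\hat S_i=\hat\xi_1+\dots+\hat\xi_i$ and $\hat C=\conv(\hat S_0,\dots,\hat S_{n+1})$. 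The elementary starting observation is that any cyclic rotation of a sequence of vectors summing to zero produces the same set of partial sums up to a translation; here the partial sums of the rotation by $j$ form the set $\{S_0,\dots,S_n\}-S_j$. Hence $\hat C=C_n-S_J$, so $f_k(\hat C)=f_k(C_n)$ a.s.\ and therefore $\E[f_k(\hat C)]=\E[f_k(C_n)]$. If we can show that $(\hat\xi_1,\dots,\hat\xi_{n+1})$ satisfies $(\text{Br})$, $(\text{Ex})$ and $(\text{GP}')$, then formula~\eqref{eq:expect_faces_bridge_big_sum} (a consequence of Theorem~\ref{1249bridge}) applies to the bridge $\hat C$, showing that $\E[f_k(\hat C)]$ is given by the same distribution-free expression as $\E[f_k(C_{n+1}')]$; combined with the previous sentence this yields~\eqref{1613}.

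The only nontrivial hypothesis to check is $(\text{Ex})$, and this is the step I expect to be the main obstacle: a uniform cyclic rotation a priori yields only cyclic exchangeability. The point is that the law $\mu$ of $(\xi_1,\dots,\xi_{n+1})$ is already invariant under the subgroup $\Sym(n)\le\Sym(n+1)$ of permutations fixing the last coordinate, because $(\xi_1,\dots,\xi_n)$ is exchangeable and $\xi_{n+1}=-(\xi_1+\dots+\xi_n)$ is a symmetric function of the first $n$ coordinates. I would then note two facts: the $n+1$ cyclic rotations, viewed as permutations, form a complete set of left coset representatives of $\Sym(n)$ in $\Sym(n+1)$; and if a probability measure is invariant under a subgroup, then the uniform average of its pushforwards over a complete set of left coset representatives is invariant under the whole group. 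The latter is a one-line verification: for $\tau\in\Sym(n+1)$, left multiplication by $\tau$ permutes the cosets, so the translated representatives again meet each coset exactly once, and the resulting $\Sym(n)$-factors are absorbed by the invariance of $\mu$. Since the law of $(\hat\xi_1,\dots,\hat\xi_{n+1})$ is precisely this average, it is $\Sym(n+1)$-invariant, i.e.\ $(\text{Ex})$ holds. Property $(\text{Br})$ is immediate since $\hat\xi_1+\dots+\hat\xi_{n+1}=0$.

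For $(\text{GP}')$ I would condition on $\{J=j\}$: for indices $1\le i_1<\dots<i_d\le n$ one has $\hat S_{i_l}=S_{a_l}-S_j$, where $a_1,\dots,a_d,j$ are pairwise distinct elements of $\{0,1,\dots,n\}$ and $S_0=0$. A linear dependence among $\hat S_{i_1},\dots,\hat S_{i_d}$ then forces one of the events ``$S_j=0$'', ``a collection of at most $d$ of the vectors $S_l$ with $l\ge1$ is linearly dependent'', or ``the $d+1$ points $S_{a_1},\dots,S_{a_d},S_j$ lie in a common affine hyperplane'', according to whether the coefficients of the dependence sum to zero or not and whether one of the $a_l$ vanishes. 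All of these have probability zero: the first two by $(\text{GP})$, the last by the computation in Remark~\ref{rem:simplicial}. As this holds for each value of $j$, $(\text{GP}')$ follows. The degenerate case $d=1$, where trivially $f_0(C_n)=f_0(C_{n+1}')=2$, can be disposed of directly. Putting the three paragraphs together gives $\E[f_k(C_n)]=\E[f_k(\hat C)]=\E[f_k(C_{n+1}')]$, which proves Theorem~\ref{1455} and thereby completes the proof of Theorem~\ref{theo:expected_walk}.
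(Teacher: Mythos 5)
Your proof is correct, and while it is built on the same basic construction as the paper's (close the walk with $\xi_{n+1}:=-S_n$, randomize the increments, and invoke the distribution-freeness of $\E[f_k(C_{n+1}')]$ coming from Theorem~\ref{1249bridge}), you distribute the work differently. The paper reshuffles by a uniform permutation of all of $\Sym(n+1)$, which makes exchangeability of the resulting bridge immediate, but then must prove the face-count identity $\E[f_k(C_n)]=\E[f_k(C_{n+1}^{\bsigma})]$ by a distributional argument: for each fixed $\sigma$ one cyclically rotates the closed path so that the artificial increment $\xi_{n+1}$ sits last, and uses $(\text{Ex})$ of the original walk to identify the remaining partial sums in law with $(0,S_1,\dots,S_n)$. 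You instead randomize only over the $n+1$ cyclic rotations, for which the face-count identity is a pathwise triviality ($\hat C=C_n-S_J$, so $f_k(\hat C)=f_k(C_n)$ almost surely), and the burden shifts to exchangeability, which you obtain from the observation that the law of $(\xi_1,\dots,\xi_{n+1})$ is invariant under the stabilizer $\Sym(n)$ of the last coordinate and that averaging a subgroup-invariant measure over a complete set of coset representatives produces a measure invariant under the full group. These are really two readings of the same coset decomposition — the paper's sum over $\sigma\in\Sym(n+1)$ grouped by the position $r$ with $\sigma(r)=n+1$ is exactly a sum over the cosets of $\Sym(n)$ — but your version isolates a clean, reusable group-theoretic lemma and makes the geometric step deterministic, at the price of a slightly more delicate verification of $(\text{GP}')$ (your case analysis, splitting according to whether the coefficients of a putative linear dependence among the $\hat S_{i_l}=S_{a_l}-S_J$ sum to zero, and falling back on $(\text{GP})$ and the hyperplane computation of Remark~\ref{rem:simplicial}, is sound). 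Both arguments conclude identically via \eqref{eq:expect_faces_bridge_big_sum}.
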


\begin{proof}
From~\eqref{eq:expect_faces_bridge_big_sum} we know that $\E [f_k(C_{n+1}')]$ does not depend on the choice of the particular bridge $(S_i')_{i=0}^{n+1}$. Hence, it suffices to prove~\eqref{1613} for any random bridge of our choice. This bridge will be constructed as follows.
Start with a random walk $(S_i)_{i=0}^n$ satisfying $(\text{Ex})$ and $(\text{GP})$, and consider the closed path $S_0,S_1,\dots, S_n, 0$.  Although this path returns to $S_{n+1}:=0$ at step $n+1$,  its increments are not exchangeable because the last increment $\xi_{n+1}:=-S_n$ and, say, $\xi_1$ have different distributions. In order to enforce the exchangeability, we shall consider a random permutation of the increments of this closed path. More precisely, our construction goes as follows.

For any permutation $\sigma$ from the symmetric group $\Sym(n+1)$, consider the random sequence $(S^{\sigma}_i)_{i=0}^{n+1}$ starting at $S^{\sigma}_0:=0$ and defined by
$$
S^{\sigma}_i:=\xi_{\sigma(1)}+\dots+\xi_{\sigma(i)}, \quad 1\leq i \leq n+1.
$$
Clearly, each sequence terminates at $S^{\sigma}_{n+1} = \xi_1+\dots+\xi_n+\xi_{n+1}=0$. Denote by $C^{\sigma}_{n+1}:=\conv(S^{\sigma}_0,S^{\sigma}_1,\dots,S^{\sigma}_{n+1})$ its convex hull.
Consider a random permutation $\bsigma$ that is uniformly distributed on the symmetric group $\Sym(n+1)$ and independent of the random walk $(S_i)_{i=0}^n$.

It is clear that $(S^{\bsigma}_i)_{i=0}^{n+1}$ satisfies conditions $(\text{Br})$ and $(\text{Ex})$ with $n$ replaced by $n+1$. To verify $(\text{GP}')$ (with $n+1$ as well), by $(\text{Ex})$ it suffices to show that for any $p\in \{1,\ldots,d\}$ and $1 \leq l_1< \ldots < l_p \leq l_{p+1}< \ldots < l_d <n$ and also for $p=0$ and any $0 \leq l_{1}< \ldots < l_d <n$,  the random vectors
$$
S_{l_1}, \ldots, S_{l_p}, S_{l_{p+1}}-S_n, \ldots, S_{l_{d}}-S_n
$$
are linearly independent with probability $1$. Equivalently, the increments
$$
S_{l_1}, S_{l_2}-S_{l_1}, \ldots, S_{l_p} - S_{l_{p-1}}, S_{n} -S_{l_d}, S_{l_d}-S_{l_{d-1}}, \ldots, S_{l_{p+2}}-S_{l_{p+1}}
$$
are a.s.\ linearly independent. Since these increments are taken over disjoint time intervals, their linear independence follows from assumptions  $(\text{Ex})$ and $(\text{GP})$ imposed on $(S_i)_{i=0}^n$, as in the proof of Remark~\ref{rem:simplicial}. This verifies $(\text{GP}')$ (with $n+1$) for $(S^{\bsigma}_i)_{i=0}^{n+1}$.

By the distribution freeness of the expected number of $k$-faces under $(\text{Br})$, $(\text{Ex})$, and $(\text{GP}')$ (see\eqref{eq:expect_faces_bridge_big_sum}), it remains to prove that
\begin{equation}\label{1614}
\E [f_k(C_n)]=\E [f_k(C^{\bsigma}_{n+1})].
\end{equation}
We have
\begin{equation}\label{1838}
\E [f_k(C^{\bsigma}_{n+1})]=\frac{1}{(n+1)!}\sum_{\sigma\in\Sym(n+1)}\E [f_k(\conv(S^\sigma_1,\dots,S^\sigma_{n+1}))].
\end{equation}
Fix a permutation $\sigma\in \Sym(n+1)$ and let $r\in \{1,\dots,n+1\}$ be such that $\sigma(r)=n+1$. Then
\begin{multline*}
  \conv(S^\sigma_1,S^\sigma_2,\dots,S^\sigma_{n+1})=\conv(S^\sigma_{r},\dots,S^\sigma_{n+1},S^\sigma_1,\dots,S^\sigma_{r-1})\\
  =S^\sigma_{r}+\conv(0,S^\sigma_{r+1}-S^\sigma_{r},\dots,S^\sigma_{n+1}-S^\sigma_{r},S^\sigma_{n+1}-S^\sigma_{r}+S^\sigma_1,\dots,S^\sigma_{n+1}-S^\sigma_{r}+S^\sigma_{r-1}),
\end{multline*}
where in the second equality we used that $S^\sigma_{n+1}=0$. Since shifts do not change the number of faces, we arrive at
\begin{multline}\label{1839}
  f_k(\conv(S^\sigma_1,S^\sigma_2,\dots,S^\sigma_{n+1}))\\
  =f_k(\conv(0,S^\sigma_{r+1}-S^\sigma_{r},\dots,S^\sigma_{n+1}-S^\sigma_{r},S^\sigma_{n+1}-S^\sigma_{r}+S^\sigma_1,\dots,S^\sigma_{n+1}-S^\sigma_{r}+S^\sigma_{r-1})).
\end{multline}
It follows from $\sigma(r)=n+1$ and condition $(\text{Ex})$ that
$$
(\xi_{\sigma(r+1)},\dots,\xi_{\sigma(n+1)},\xi_{\sigma(1)},\dots,\xi_{\sigma(r-1)})\eqdistr(\xi_1,\dots,\xi_n),
$$
which implies, by taking partial sums at both sides, that
\begin{equation}\label{1840}
 (0,S^\sigma_{r+1}-S^\sigma_{r},\dots,S^\sigma_{n+1}-S^\sigma_{r},S^\sigma_{n+1}-S^\sigma_{r}+S^\sigma_1,\dots,S^\sigma_{n+1}-S^\sigma_{r}+S^\sigma_{r-1})\eqdistr(0,S_1,\dots,S_n).
\end{equation}
Combining~\eqref{1839} and~\eqref{1840}, we obtain that for every deterministic $\sigma\in\Sym(n+1)$,
$$
\E [f_k(\conv(S^\sigma_1,S^\sigma_2,\dots,S^\sigma_{n+1}))] = \E [f_k(C_n)].
$$
Inserting this into~\eqref{1838} yields~\eqref{1614}, and the theorem follows.
\end{proof}

\section{Proof of Theorem~\ref{theo:1139}}\label{sec:proof_shifted}
The following two propositions combined with Theorem~\ref{1249bridge} yield Theorem~\ref{theo:1139}. Their proofs use the same ideas of reshuffling of increments as in the proof of Theorem~\ref{1455}.
\begin{proposition}\label{prop:shift}
With the same notation and assumptions as in Theorem~\ref{1455}, for all indices $1\leq l_1 < \dots < l_k \leq n$,
\begin{multline*} 
\frac 1 {n+1} \sum_{i=0}^n \P[\conv(S_{i}, S_{i+l_1},\dots, S_{i+l_{k}})\in \cF_k(C_n)]
= \P[\conv(0, S_{l_1}',\dots, S_{l_{k}}')\in \cF_k(C_{n+1}')],
\end{multline*}
where we put $S_{i+l_j} = S_{(i+l_j)-(n+1)}$ if $i+l_j\geq n+1$. 
\end{proposition}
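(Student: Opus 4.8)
The plan is to mimic the proof of Theorem~\ref{1455}: realize the right-hand side through a concrete random bridge of length $n+1$ manufactured from the walk $(S_i)_{i=0}^n$, then expand over permutations and recover the left-hand side using exchangeability and the shift-invariance of the face structure. Concretely, I would set $\xi_{n+1}:=-S_n$ and, for $\sigma\in\Sym(n+1)$, define $S^\sigma_0:=0$ and $S^\sigma_i:=\xi_{\sigma(1)}+\dots+\xi_{\sigma(i)}$ for $1\le i\le n+1$, so that $S^\sigma_{n+1}=0$; let $\bsigma$ be uniform on $\Sym(n+1)$ and independent of the walk, and write $C^\sigma_{n+1}:=\conv(S^\sigma_0,\dots,S^\sigma_{n+1})$. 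The proof of Theorem~\ref{1455} already shows that $(S^{\bsigma}_i)_{i=0}^{n+1}$ is a random bridge satisfying $(\text{Br})$, $(\text{Ex})$, $(\text{GP}')$ with $n$ replaced by $n+1$. Since $1\le l_1<\dots<l_k\le n$, the tuple $0<l_1<\dots<l_k<n+1$ is admissible in Theorem~\ref{1249bridge}, whose formula is distribution-free; hence the arbitrary bridge $(S'_i)$ may be replaced by $(S^{\bsigma}_i)$, and the right-hand side of the proposition equals $\P[\conv(S^{\bsigma}_0,S^{\bsigma}_{l_1},\dots,S^{\bsigma}_{l_k})\in\cF_k(C^{\bsigma}_{n+1})]$.

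Next I would condition on $\bsigma$, so that this probability becomes $\frac{1}{(n+1)!}\sum_{\sigma}p_\sigma$ with $p_\sigma:=\P[\conv(S^\sigma_0,S^\sigma_{l_1},\dots,S^\sigma_{l_k})\in\cF_k(C^\sigma_{n+1})]$, and then analyse a fixed $\sigma$ exactly as in the proof of Theorem~\ref{1455}. Letting $r\in\{1,\dots,n+1\}$ be given by $\sigma(r)=n+1$ and using $S^\sigma_{n+1}=0$, one cyclically rotates the closed path to start at index $r$ and translates by $-S^\sigma_r$; this identifies $C^\sigma_{n+1}-S^\sigma_r$ with $\conv(\widetilde S_0,\dots,\widetilde S_n)$, where $(\widetilde S_m)_{m=0}^n$ is the partial-sum sequence of $\xi_{\sigma(r+1)},\dots,\xi_{\sigma(n+1)},\xi_{\sigma(1)},\dots,\xi_{\sigma(r-1)}$—a reindexing of $\xi_1,\dots,\xi_n$ because $\sigma(r)=n+1$—and moreover $S^\sigma_j-S^\sigma_r=\widetilde S_{(j-r)\bmod(n+1)}$ for all $j$ (representatives taken in $\{0,\dots,n\}$). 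Since translations preserve which subsets of vertices span faces, writing $i:=n+1-r\in\{0,\dots,n\}$ and noting $(j-r)\equiv j+i\pmod{n+1}$ turns $p_\sigma$ into $\P[\conv(\widetilde S_i,\widetilde S_{(i+l_1)\bmod(n+1)},\dots,\widetilde S_{(i+l_k)\bmod(n+1)})\in\cF_k(\conv(\widetilde S_0,\dots,\widetilde S_n))]$; by $(\text{Ex})$, $(\widetilde S_m)_{m=0}^n\eqdistr(S_m)_{m=0}^n$, so $p_\sigma$ depends on $\sigma$ only through $i$ and equals $\P[\conv(S_i,S_{(i+l_1)\bmod(n+1)},\dots,S_{(i+l_k)\bmod(n+1)})\in\cF_k(C_n)]$, which is the $i$-th summand on the left-hand side of the proposition with the stated convention $S_{i+l_j}=S_{(i+l_j)-(n+1)}$ for $i+l_j\ge n+1$ (one checks $1\le l_j\le n$ forces these $k+1$ indices modulo $n+1$ to be pairwise distinct).

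Finally I would count: each value of $r\in\{1,\dots,n+1\}$ is attained by exactly $n!$ permutations $\sigma\in\Sym(n+1)$, so $\frac{1}{(n+1)!}\sum_\sigma p_\sigma=\frac{1}{n+1}\sum_{r=1}^{n+1}p_{\sigma}\bigm|_{i=n+1-r}=\frac{1}{n+1}\sum_{i=0}^{n}\P[\conv(S_i,S_{(i+l_1)\bmod(n+1)},\dots,S_{(i+l_k)\bmod(n+1)})\in\cF_k(C_n)]$, which is precisely the left-hand side; combining this with the reduction in the first paragraph completes the argument. I expect the only genuine obstacle to be the bookkeeping of the cyclic indices in the rotation step—matching $(j-r)\bmod(n+1)$ with $(i+l_j)\bmod(n+1)$ under $i=n+1-r$, and checking the boundary case $i+l_j=n+1$, where the point in question is $\widetilde S_0=0=S^\sigma_{n+1}$, consistent with the convention. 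Everything else is exchangeability together with the elementary invariance of the face lattice of a polytope under translation.
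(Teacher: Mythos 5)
Your proposal is correct and follows essentially the same route as the paper's own proof: construct the length-$(n+1)$ bridge by appending $\xi_{n+1}=-S_n$ and reshuffling with a uniform $\bsigma\in\Sym(n+1)$, invoke the distribution-freeness of Theorem~\ref{1249bridge}, then for each fixed $\sigma$ with $\sigma(r)=n+1$ rotate and translate the closed path, apply $(\text{Ex})$, and count the $n!$ permutations realizing each value of $r$. The index bookkeeping via $i=n+1-r$ and reduction modulo $n+1$ matches the paper's argument exactly.
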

\begin{proof}

The increments $\xi_1,\dots,\xi_n$ of the random walk $(S_i)_{i=0}^n$ satisfy $(\text{Ex})$ and $(\text{GP})$. Repeating the proof of Theorem~\ref{1455}, we define $\xi_{n+1} := -S_n$ and reshuffle the increments $\xi_1,\dots,\xi_{n+1}$ according to a random uniformly distributed permutation $\bsigma\in \Sym(n+1)$ independent of $(S_i)_{i=0}^n$. Since $(S_{i}^\bsigma)_{i=0}^{n+1}$ is a random bridge satisfying $(\text{Br})$, $(\text{Ex})$, and $(\text{GP}')$ (with $n$ replaced by $n+1$), by Theorem~\ref{1249bridge} and the definition of $(S_{i}^\bsigma)_{i=0}^{n+1}$, we have
\begin{multline}\label{eq:shift_faces}
\P[\conv(0, S_{l_1}',\dots, S_{l_{k}}')\in \cF_k(C_{n+1}')]
\\=
\frac 1 {(n+1)!} \sum_{\sigma\in \Sym(n+1)}
\P[\conv (S_0^{\sigma}, S_{l_1}^\sigma,\dots, S_{l_{k}}^\sigma) \in \cF_k(\conv (S_0^{\sigma},S_1^{\sigma}, \dots, S_{n+1}^{\sigma}))].
\end{multline}

Fix a permutation $\sigma\in \Sym(n+1)$ and let $r\in \{1,\dots,n+1\}$ be such that $\sigma(r) = n+1$. Since the shift by $S_r^{\sigma}$ does not change the structure of the convex hull, we have
\begin{multline}\label{eq:wspom1}
\P[\conv (S_0^{\sigma}, S_{l_1}^\sigma,\dots, S_{l_{k}}^\sigma) \in \cF_k(\conv (S_0^{\sigma},S_1^{\sigma}, \dots, S_{n+1}^{\sigma}))]
\\=
\P[\conv (S_0^{\sigma}-S_r^{\sigma}, S_{l_1}^\sigma-S_r^{\sigma},\dots, S_{l_{k}}^\sigma-S_r^{\sigma})
\in \cF_k(\conv (S_0^{\sigma}-S_r^{\sigma},S_1^{\sigma}-S_r^{\sigma}, \dots, S_{n}^{\sigma}-S_r^{\sigma}))].
\end{multline}
Recall that from~\eqref{1840} and $S^\sigma_{n+1} = 0$,
\begin{equation}\label{1840a}
(S_r^{\sigma}- S_r^{\sigma},S^\sigma_{r+1}-S^\sigma_{r},\dots,S^\sigma_{n+1}-S^\sigma_{r},S^\sigma_1-S^\sigma_{r},\dots,S^\sigma_{r-1}-S^\sigma_{r})
\eqdistr(0,S_1,\dots,S_n).
\end{equation}
Note that $S_l^\sigma-S_r^{\sigma}$ on the left-hand side corresponds to $S_{l-r}$ on the right-hand side if we agree to understand all indices modulo $n+1$.   Applying~\eqref{1840a} to the right-hand side of~\eqref{eq:wspom1} and using the fact that $\{S_l\}_{l=0}^n = \{S_{l-r}\}_{l=0}^n$,  we arrive at
\begin{multline*}
\P[\conv (S_0^{\sigma}, S_{l_1}^\sigma,\dots, S_{l_{k}}^\sigma) \in \cF_k(\conv (S_0^{\sigma},S_1^{\sigma}, \dots, S_{n+1}^{\sigma}))]
\\=
\P[\conv (S_{-r}, S_{l_1-r},\dots, S_{l_{k}-r})
\in \cF_k(\conv (S_0,S_1, \dots, S_{n}))].
\end{multline*}
Taking the sum over all $\sigma \in \Sym(n+1)$ and observing that for any fixed $r\in \{1,\dots,n+1\}$ there are $n!$ permutations $\sigma$ for which $\sigma(r) = n+1$, we arrive at
\begin{multline*}
\text{RHS of~\eqref{eq:shift_faces}}
= \frac{n!}{(n+1)!} \sum_{r=1}^{n+1} \P[\conv (S_{-r}, S_{l_1-r},\dots, S_{l_{k}-r})
\in \cF_k(\conv (S_0,S_1, \dots, S_{n}))].
\end{multline*}
Substituting  $i = n+1-r$ and recalling that the indices are considered modulo $n+1$ completes the proof.
\end{proof}

\begin{proposition}\label{1139}
With the same notation and assumptions as in Theorem~\ref{1455}, for all indices $1\leq l_1 < \dots < l_k \leq n$,
\begin{multline} \label{1921}
\frac 1 {n-l_k+1} \sum_{i=0}^{n-l_k} \P[\conv(S_{i}, S_{i+l_1},\dots, S_{i+l_{k}})\in \cF_k(C_n)]
=\P[\conv(0, S_{l_1}',\dots, S_{l_{k}}')\in \cF_k(C_{n+1}')].
\end{multline}
\end{proposition}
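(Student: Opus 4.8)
The plan is to repeat the reshuffling argument of Theorem~\ref{1455} and Proposition~\ref{prop:shift}, but with a reshuffling tuned so that the artificial ``closing'' increment $-S_n$ stays outside the first $k$ blocks of increments and is placed uniformly at random inside the last one. Let $\tau$ be uniform on $\{0,1,\dots,n-l_k\}$ and independent of $(S_i)_{i=0}^n$, and cyclically cut the closed path $S_0,S_1,\dots,S_n,0$ at time $\tau$: this defines a random bridge $(\hat S_i)_{i=0}^{n+1}$ of length $n+1$ with increments $\xi_{\tau+1},\dots,\xi_n,-S_n,\xi_1,\dots,\xi_\tau$, so that $\hat S_i=S_{\tau+i}-S_\tau$ for $0\le i\le n-\tau$, then $\hat S_{n-\tau+1}=-S_\tau$, and $\hat S_{n-\tau+1+m}=S_m-S_\tau$ for $0\le m\le\tau$. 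Then $\hat S_{n+1}=0$, $\{\hat S_0,\dots,\hat S_{n+1}\}=\{S_j-S_\tau:0\le j\le n\}$, so $\conv(\hat S_0,\dots,\hat S_{n+1})=C_n-S_\tau$, and (since $\tau\le n-l_k$) $\hat S_{l_j}=S_{\tau+l_j}-S_\tau$; since faces are translation invariant, conditioning on $\tau=t$ identifies the event $\{\conv(\hat S_0,\hat S_{l_1},\dots,\hat S_{l_k})\in\cF_k(\conv(\hat S_0,\dots,\hat S_{n+1}))\}$ with $\{\conv(S_t,S_{t+l_1},\dots,S_{t+l_k})\in\cF_k(C_n)\}$. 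Averaging over $\tau$ therefore rewrites the left-hand side of~\eqref{1921} as $\P[\conv(\hat S_0,\hat S_{l_1},\dots,\hat S_{l_k})\in\cF_k(\conv(\hat S_0,\dots,\hat S_{n+1}))]$.

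Next I would verify that $(\hat S_i)_{i=0}^{n+1}$ satisfies the hypotheses needed to run the proof of Theorem~\ref{1249bridge} for the face at positions $\{0,l_1,\dots,l_k\}$. That proof splits the bridge increments into $k+1$ consecutive blocks $B_1,\dots,B_{k+1}$ of lengths $l_1,l_2-l_1,\dots,l_k-l_{k-1},n+1-l_k$, projects onto $M:=\aff^\perp(\hat S_0,\hat S_{l_1},\dots,\hat S_{l_k})$, and uses only (a) invariance of the joint law of the unprojected increments under arbitrary permutations inside each block --- which also leaves $M$ fixed, since those permutations do not change the partial sums $\hat S_0,\hat S_{l_1},\dots,\hat S_{l_k}$ --- and (b) general position. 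For (a): the increments in $B_1,\dots,B_k$ are $\xi_{\tau+1},\dots,\xi_{\tau+l_k}$ (genuine increments of the original walk, since $\tau+l_k\le n$), while $B_{k+1}$ contains the remaining $\xi$'s and the extra increment $-S_n$, which is minus the sum of all the $\xi$-increments of the bridge; conditionally on $\tau$ these $n$ increments, read in order, are a cyclic rotation of $(\xi_1,\dots,\xi_n)$, hence an exchangeable tuple with $\tau$-free conditional law, and the slot of $-S_n$ inside $B_{k+1}$ is $n-l_k-\tau+1$, which by the choice of the range of $\tau$ is uniform over the $n+1-l_k$ slots of $B_{k+1}$ and independent of the $\xi$-values. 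A short check then gives that the joint law of the $n+1$ increments, grouped into $B_1,\dots,B_{k+1}$, is invariant under permutations within each block, i.e.\ condition~\eqref{eq:invar_product} holds for the $k+1$ projected bridges. Property (b) follows from $(\text{Ex})$ and $(\text{GP})$ by the disjoint-increments reasoning already used to check $(\text{GP}')$ in the proof of Theorem~\ref{1455}.

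Granting (a) and (b), I would finish exactly as in the proof of Theorem~\ref{1249bridge}: $\dim M=d-k$ a.s.\ by $(\text{Ex})$ and $(\text{GP})$, $\conv(\hat S_0,\hat S_{l_1},\dots,\hat S_{l_k})$ is a $k$-face of $\conv(\hat S_0,\dots,\hat S_{n+1})$ precisely when the projected origin is a vertex of the joint convex hull in $M\cong\R^{d-k}$ of the $k+1$ projected bridges of lengths $l_1,l_2-l_1,\dots,l_k-l_{k-1},n+1-l_k$, and Theorem~\ref{1435} (together with Remark~\ref{rem:zero included} and~\eqref{eq:non_absorption}) then evaluates $\P[\conv(\hat S_0,\hat S_{l_1},\dots,\hat S_{l_k})\in\cF_k(\conv(\hat S_0,\dots,\hat S_{n+1}))]$. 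Because the block lengths coincide with those appearing in Theorem~\ref{1249bridge} for a length-$(n+1)$ bridge with face indices $0,l_1,\dots,l_k$, the defining polynomials and denominators match, and the value obtained is exactly $\P[\conv(0,S'_{l_1},\dots,S'_{l_k})\in\cF_k(C'_{n+1})]$; combined with the previous paragraph this yields~\eqref{1921}.

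The step I expect to require the most care is (a): one must check that averaging over $\tau\in\{0,\dots,n-l_k\}$ --- rather than over the full cyclic range $\{0,\dots,n\}$, which would simply reproduce Proposition~\ref{prop:shift} with its wrap-around terms --- simultaneously keeps $-S_n$ out of $B_1,\dots,B_k$ (so those blocks remain genuinely exchangeable) and makes $-S_n$ land at a uniform slot of $B_{k+1}$, so that the within-block permutation invariance demanded by Theorem~\ref{1435} holds on the nose and not merely in an averaged sense.
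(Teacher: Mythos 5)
Your proposal is correct and follows essentially the same route as the paper: the paper keeps $\xi_1,\dots,\xi_{l_k}$ fixed and uniformly reshuffles the remaining $n-l_k+1$ increments including $\xi_{n+1}:=-S_n$ (so that the closing increment lands at a uniform slot of the last block), identifies the shift average over $i=0,\dots,n-l_k$ with a single face probability of the resulting bridge by conditioning on that slot and cyclically shifting, and then---since this bridge is only block-wise invariant rather than fully exchangeable---reruns the projection argument of Theorem~\ref{1249bridge} and applies Theorem~\ref{1435}. Your uniform cyclic cut at $\tau\in\{0,\dots,n-l_k\}$ produces a bridge with the same law (by exchangeability of the original increments), and your verification of the within-block invariance and of the general position condition matches the paper's.
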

\begin{proof}
The main idea is to combine the proof of Theorem~\ref{1249bridge} with the method of reshuffling from the proof of Theorem~\ref{1455}.
From Theorem~\ref{1249bridge} we know that the face probability on the right-hand side of~\eqref{1921} is distribution-free under $(\text{Br})$, $(\text{Ex})$, and $(\text{GP}')$. Hence, it suffices to prove~\eqref{1921} for a bridge of our choice.

The increments $\xi_1,\dots,\xi_n$ of the random walk $(S_i)_{i=0}^n$ satisfy $(\text{Ex})$ and $(\text{GP})$. Similarly to the proof of Theorem~\ref{1455}, we define $\xi_{n+1} := -S_n$ and reshuffle the increments $\xi_1,\dots,\xi_{n+1}$ according to a random permutation $\bsigma\in \Sym(n+1)$ given by
$$\bsigma =(1, \ldots, l_k, l_k + \bsigma'(1), \ldots, l_k + \bsigma'(n-l_k +1)),$$
where $\bsigma' \in \Sym(n-l_k+1)$ is a uniformly distributed random permutation independent of $(S_i)_{i=0}^n$.

Let $\rho$ be the random variable defined by $\bsigma'(\rho) = n-l_k+1$. Clearly, $\rho$ is distributed uniformly on $\{1, \ldots, n-l_k+1 \}$. Consider the random permutation $\tau \in \Sym(n+1)$
$$\tau:=(l_k+ \bsigma'(\rho+ 1), \dots, l_k+\bsigma'(n-l_k+1), 1,  \ldots, l_k, l_k + \bsigma'(1), \ldots,l_k+ \bsigma'(\rho-1), n+1)$$
obtained by a cyclic shift of $\bsigma$, that is $\tau(n-l_k-\rho+1+i) = \bsigma(i)$ for $1 \le i \le n+1$ if we agree to understand all indices modulo $n+1$. Then
$$(S_{i}^\bsigma)_{i=0}^{n} = -S_{n-l_k-\rho+1}^\tau    + (S_{n-l_k-\rho+1}^\tau , S_{n-l_k-\rho+2}^\tau , \ldots, S_n^\tau, 0, S_1^\tau, \ldots, S_{n-l_k-\rho}^\tau),$$
implying, by the equality
$$\{S_{n-l_k-\rho+1}^\tau , S_{n-l_k-\rho+2}^\tau , \ldots, S_n^\tau, 0, S_1^\tau, \ldots, S_{n-l_k-\rho}^\tau \} = \{ S_i \}_{i=0}^n$$
and the fact that shifts do not change the structure of the convex hull, the equality of the events
\begin{multline*}
\bigl \{ \conv(0, S^\bsigma_{l_1},\ldots, S^\bsigma_{l_k})\in \cF_k(C^\bsigma_{n+1}) \bigr \}\\
= \bigl \{  \conv(S_{n-l_k-\rho+1}^\tau, S_{(n-l_k-\rho+ 1) + l_1}^\tau, \ldots, S_{(n - l_k -\rho + 1) + l_k}^\tau) \in \cF_k(C_n) \bigr \}.
\end{multline*}
Finally, we pass to probabilities and condition on $\rho$ in the right-hand side. Using the distributional identity $\text{Law}((S_i)_{i=0}^n ) = \text{Law} ((S_i^\tau)_{i=0}^n | \rho = \rho_0)$ for all $\rho_0\in \{1,\ldots, n-l_k+1\}$, which holds since
the random walk $(S_i)_{i=0}^n$ satisfies  $(\text{Ex})$ and the increments of $(S_i^\tau)_{i=0}^n $ do not include $\xi_{n+1}$, we arrive at
\begin{equation} \label{eq: conv permuted}
\P[\conv(0, S^\bsigma_{l_1},\ldots, S^\bsigma_{l_k})\in \cF_k(C^\bsigma_{n+1})]
=
\frac 1 {n-l_k+1} \sum_{i=0}^{n-l_k} \P[\conv (S_i, S_{i+l_1},\dots, S_{i+l_{k}}) \in \cF_k(C_n)].
\end{equation}

It remains to compute the left-hand side to prove the proposition. We cannot apply Theorem~\ref{1249bridge} directly since $(S_{i}^\bsigma)_{i=0}^{n+1}$ does not have exchangeable increments: $\xi_{\bsigma(n+1)}$ and, say, $\xi_1$ clearly have different distributions. However, the argument of the proof of Theorem~\ref{1249bridge} applies directly, and we repeat it below.

Consider the linear hull $M:=\lin^\perp(S_0^\bsigma,S_{l_1}^\bsigma,\dots,S_{l_{k}}^\bsigma)$ and note that $\dim M = d-k$ a.s.  Projecting the path $S_0^\bsigma, \dots,S_{n+1}^\bsigma$ on $M$, we obtain $k+1$ random bridges that  start and terminate at $0 \in M$. The increments of the bridges are given by
$$
\eta_1^{(j)}=\xi_{l_{j-1}+1}|M, \;\; \dots, \;\; \eta_{l_{j}-l_{j-1}}^{(j)}=\xi_{l_{j}}|M,
\quad j=1,\dots k,
$$
where $l_0:=0$, and
$$
\eta_1^{(k+1)}=\xi_{l_k+\bsigma'(1)}|M, \;\; \dots, \;\; \eta_{n-l_{k}+1}^{(k+1)}=\xi_{l_k+\bsigma'(n-l_k+1)}|M.
$$
Let $H_0^\bsigma:=C^\bsigma_{n+1}|M$ denote the convex hull of these $k+1$ random bridges in $M \cong \R^{d-k}$ a.s. Then
\begin{equation} \label{eq: H permuted}
\P[\conv(0, S^\bsigma_{l_1},\ldots, S^\bsigma_{l_k})\in \cF_k(C^\bsigma_{n+1})] = \P[0 \in \mathcal{F}_0(H_0^\bsigma)].
\end{equation}

It is easy to see that  the invariance condition~\eqref{eq:invar_product} of Theorem~\ref{1435} is satisfied for these $r=k+1$ random bridges (and $s=0$ random walks). The argument uses the same ideas as in the proofs of Theorems~\ref{1249} and~\ref{1249bridge}. The general position assumption of Theorem~\ref{1435} is verified in the same manner as in the  proof of Theorem~\ref{1249bridge} supplemented by the respective argument from the proof of Theorem~\ref{1455}.

Now 
\eqref{1921} follows by combining \eqref{eq: conv permuted} with \eqref{eq: H permuted} and applying Theorem~\ref{1435} (see also Remark~\ref{rem:zero included} and~\eqref{eq:non_absorption}) to $H_0^\bsigma$ exactly as in the proof of Theorem~\ref{1249bridge}.
\end{proof}

\section{Further remarks and conjectures}

\subsection{Results not requiring the general position assumption}
Let $(S_i)_{i=0}^n$ and $(S_i')_{i=0}^n$ be two random walks with increments $(\xi_1,\dots, \xi_n)$ and $(\xi_1',\dots, \xi_n')$, respectively, such that $(\xi_1,\dots, \xi_n)$ satisfies $(\pm\text{Ex})$ and $(\text{GP})$, while $(\xi_1',\dots, \xi_n')$ satisfies $(\pm\text{Ex})$ only.  Denote the corresponding convex hulls by $C_n := \conv(S_0,\dots,S_n)$ and $C_n' := \conv(S_0',\dots,S_n')$. All faces of $C_n$ are simplices a.s., see Remark~\ref{rem:simplicial}, but this is in general not true for $C_n'$. The face probabilities of $C_n'$ are not distribution-free, but it can be shown that
\begin{multline}\label{eq:no_general_pos_inequality}
\P[\conv (S_{i_1}', \dots, S_{i_{k+1}}') \in \mathcal F_k(C_n')]
\leq
\P[\conv (S_{i_1}, \dots, S_{i_{k+1}}) \in \mathcal F_k(C_n)]\\
\leq
\P[\conv (S_{i_1}', \dots, S_{i_{k+1}}')\subset F' \text{ for some } F' \in \mathcal F_k(C_n')],
\end{multline}
where the distribution-free  probability in the middle was calculated in Theorem~\ref{1249}. To prove this, one argues in the same way as in the proof of Theorem~\ref{1249}, but uses Remark~\ref{rem:non_gen_position_absorption} instead of Theorem~\ref{1435}. Similar inequalities hold for bridges violating the general position assumption.

From~\eqref{eq:no_general_pos_inequality} it is possible to deduce that under assumption $(\text{Ex})$, one has
$$
\E [f^{-}_k (C_n')] \leq  \frac{2\cdot k!}{n!} \sum_{l=0}^{\infty}\stirling{n+1}{d-2l}  \stirlingsec{d-2l}{k+1}
\leq
\E [f^{+}_k (C_n')],
$$
where $f^{-}_k(C_n')$ is the number of simplicial $k$-faces of $C_n'$, while $f^{+}_k(C_n')$ is the number of collections of indices $1\leq i_1 < \dots < i_{k+1} \leq n$ such that $\conv(S_{i_1}, \dots, S_{i_{k+1}})$ is contained in some $k$-face $F'$ of $C_n'$.



\subsection{Expected total number of faces}
Assuming  $(\text{Ex})$ and $(\text{GP})$, let us compute the expected number of faces of $C_n$ in all dimensions together.
We need the numbers $\widehat  c_N$ (which appeared in~\cite{sprugnoli}) defined by
\begin{equation}\label{eq:def_hat_c_N}
\widehat  c_N = \sum_{k=1}^N (k-1)!  \stirlingsec  {N}{k} = N! [t^N] \log \left(\frac 1 {2-\eee^t}\right) = (N-1)! [t^{N-1}] \left(\frac{\eee^t}{2-\eee^t}\right).
\end{equation}
From Theorem~\ref{theo:expected_walk} and~\eqref{eq:stirling_asympt} we obtain the formula
$$
\E \left[\sum_{k=0}^{d-1}f_{k}(C_n)\right]
=
\frac 2 {n!}\sum_{l=0}^\infty  \stirling{n+1}{d-2l} \widehat  c_{d-2l}
\sim  \frac{2 \widehat  c_{d}}{(d-1)!} (\log n)^{d-1},
$$
where the asymptotics is for $n\to\infty$ and fixed $d\in\N$.
The numbers $\widehat  c_N$ are similar to the \textit{ordered Bell numbers} $\mathcal O_N$ which count the number of weak orderings on a set of $N$ elements and are given by
$$
\mathcal O_N = \sum_{k=0}^N k! \stirlingsec{N}{k} = \frac 12 \sum_{i=0}^\infty \frac{i^N}{2^i}.
$$

\subsection{Open questions}
It is natural to ask for the limit distribution of the (appropriately normalized) number of $k$-faces of $C_n$. For convex hulls of i.i.d.\ Gaussian samples, variance asymptotics for the face numbers were established recently by Calka and Yukich~\cite{calka_yukich} (also see earlier works~\cite{BV07, Hueter, HR05} for limit theorems as LLN and CLT), but the i.i.d.\ model may behave very differently from the convex hulls of random walks studied in the present paper.  Example~\ref{ex:non_symm_RW} shows that the absorption probability is not distribution-free for (non-symmetric) random walks satisfying $(\text{Ex})$ and $(\text{GP})$. It is likely that among such random walks the absorption probability attains its maximum value for random walks satisfying $(\pm\text{Ex})$.  A similar result for convex hulls of i.i.d.\ samples was proved by Wagner and Welzl~\cite{wagner_welzl}.

It also remains open to compute the expected number of faces and the face probabilities for the \emph{simple} random walk on the lattice $\Z^d$, even for $d=2$.
\section*{Acknowledgments}
We would like to thank the referee for his/her  stimulating comments and suggestions.

\bibliography{k-face-bib}
\bibliographystyle{plain}

\end{document}